\documentclass{amsart}

\usepackage[utf8]{inputenc}

\usepackage{amsmath, amsthm, amssymb, cite, bm,enumitem,bbm,xcolor,mathtools}

\topmargin 0cm
\advance \topmargin by -\headheight
\advance \topmargin by -\headsep
     
\setlength{\paperheight}{270mm}%
\setlength{\paperwidth}{192mm}%
\textheight 22.5cm
\oddsidemargin 0cm
\evensidemargin \oddsidemargin
\marginparwidth 1.25cm
\textwidth 14cm
\setlength{\parskip}{0.05cm}

\newtheorem{theorem}{Theorem}[]
\newtheorem{lemma}[theorem]{Lemma}
\newtheorem{proposition}[theorem]{Proposition}

\newtheorem{conjecture}[theorem]{Conjecture}

\theoremstyle{definition}
\newtheorem{definition}[theorem]{Definition}

\theoremstyle{remark}

\numberwithin{equation}{section}

\newcommand{\diff}[1]{\hbox{d}#1}

\DeclareMathOperator{\Nm}{N}
\DeclareMathOperator{\dnm}{dnm}

\newcommand{\NN}{\mathbb{N}}
\newcommand{\ZZ}{\mathbb{Z}}

\newcommand{\RR}{\mathbb{R}}
\newcommand{\CC}{\mathbb{C}}


\newcommand{\twovect}[2]{\left( \begin{array}{c} #1 \\ #2 \end{arraIntroductiony} \right)}

\newcommand{\cA}{\mathcal{A}}

\newcommand{\cO}{\mathcal{O}}
\newcommand{\cP}{\mathcal{P}}

\newcommand{\fa}{\mathfrak{a}}
\newcommand{\fb}{\mathfrak{b}}

\newcommand{\fd}{\mathfrak{d}}
\newcommand{\fm}{\mathfrak{m}}
\newcommand{\fn}{\mathfrak{n}}
\newcommand{\fp}{\mathfrak{p}}
\newcommand{\fq}{\mathfrak{q}}
\newcommand{\fr}{\mathfrak{r}}
\newcommand{\fL}{\mathfrak{L}}
\newcommand{\fM}{\mathfrak{M}}
\newcommand{\fN}{\mathfrak{N}}
\newcommand{\fP}{\mathfrak{P}}


\newcommand{\x}{\bm x}

\newcommand{\aand}{\quad \hbox{and} \quad}

\newcommand{\rset}[2]{\left\{ #1 \ \left| \ #2  \right. \right\}}

\newcommand{\defiso}[5]{\[\begin{array}{rrcl} #1: & #2 & \longrightarrow & #3 \\ & #4 & \longmapsto & #5 \end{array}\]}

\newcommand{\PSI}{\underline{\psi}}
\DeclareMathOperator{\lcm}{lcm}
\DeclareMathOperator{\Bx}{Bx}
\newcommand{\fD}{\mathfrak{D}}
\newcommand{\ff}{\mathfrak{f}}
\newcommand{\fg}{\mathfrak{g}}
\newcommand{\fh}{\mathfrak{h}}
\newcommand{\fk}{\mathfrak{k}}
\newcommand{\fs}{\mathfrak{s}}
\newcommand{\fl}{\mathfrak{l}}
\newcommand{\fv}{\mathfrak{v}}

\title[Vaaler's theorem in number fields]{Vaaler's theorem in number fields}
\author{Matthew Palmer}
\thanks{The author was supported by the Knut and Alice Wallenberg Foundation}

\begin{document}

\maketitle

\begin{abstract}
In Diophantine approximation, Vaaler's theorem was an important partial result towards the Duffin--Schaeffer conjecture, which was open for almost eighty years before it was recently proven by Koukoulopoulos and Maynard. A version of this result was previously proven to also hold in imaginary quadratic fields: in this paper, we establish a version of Vaaler's theorem in general number fields.
\end{abstract}

\tableofcontents

\section*{Notation}
In this paper, \(\varphi\) represents the classical Euler totient function. Our convention for the set \(\NN\) of natural numbers is that it should not include \(0\). 

\section{Introduction}
Consider a function \(\psi : \NN \to \RR_{\geq 0}\), and define the set of \(\psi\)-approximable numbers
\[
A'(\psi) = \rset{x \in [0,1]}{\left| x - \frac{a}{n} \right| \leq \frac{\psi(n)}{n} \hbox{ for infinitely many } a \in \ZZ, n \in \NN \hbox{ with } (a,n) = 1 }. \]

In a 1941 paper (see \cite{DuffinSchaeffer1941}), Duffin and Schaeffer proved the following theorem, building on work of Khinchin and others (see \cite{Dirichlet1842}, \cite{Hermite1850}, \cite{Hurwitz1891}, \cite{Khinchin1924}):

\begin{theorem}[Duffin \& Schaeffer, 1941]
\label{thm:classicalDS}
Let \(\psi\) be such that
\[
\sum_{n \in \NN} \frac{\varphi(n) \psi(n)}{n} = \infty \quad \hbox{and} \quad \limsup_{N \to \infty} \frac{\sum_{n \leq N} \frac{\varphi(n) \psi(n)}{n}}{\sum_{n \leq N} \psi(n)} > 0.\]

Then the set \(A'(\psi)\) has Lebesgue measure \(1\).
\end{theorem}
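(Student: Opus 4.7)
The plan is to realise $A'(\psi)$ as a $\limsup$ set and apply a second-moment method combined with a zero--one law. Setting
\[ E_n = \bigcup_{\substack{0 \leq a \leq n \\ (a,n)=1}} \left[ \frac{a}{n} - \frac{\psi(n)}{n}, \frac{a}{n} + \frac{\psi(n)}{n} \right] \cap [0,1], \]
we have $A'(\psi) = \limsup_n E_n$, and (after truncating to $\psi(n) \leq 1/2$, which loses nothing since otherwise $E_n$ already covers $[0,1]$) the sets $E_n$ are disjoint unions with $|E_n| = 2\varphi(n)\psi(n)/n$. The divergence hypothesis then reads $\sum_n |E_n| = \infty$.

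Next I would invoke Gallagher's zero--one law, which tells us that $|A'(\psi)| \in \{0,1\}$, reducing the problem to showing $|A'(\psi)| > 0$. For this I would apply the Chung--Erd\H{o}s (Paley--Zygmund) inequality
\[ |A'(\psi)| \;\geq\; \limsup_{N \to \infty} \frac{\bigl( \sum_{n \leq N} |E_n| \bigr)^2}{\sum_{m,n \leq N} |E_m \cap E_n|}, \]
so everything comes down to bounding the pairwise overlaps. The target estimate is of the form
\[ \sum_{\substack{m,n \leq N \\ m \neq n}} |E_m \cap E_n| \;\ll\; \biggl( \sum_{n \leq N} \frac{\varphi(n)\psi(n)}{n} \biggr)^{\!2} + \sum_{n \leq N} \psi(n). \]
The second hypothesis in the theorem is precisely what is needed to dominate the error term $\sum_{n \leq N}\psi(n)$ by the square of $\sum_{n \leq N}\varphi(n)\psi(n)/n$ along a suitable subsequence of $N$, yielding a positive constant lower bound on $|A'(\psi)|$; Gallagher then upgrades this to measure one.

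The main obstacle is the overlap estimate itself. The naive bound $|E_m \cap E_n| \leq \min(|E_m|, |E_n|)$ is far too weak, and one has to exploit the arithmetic of coprime Farey fractions: the separation $|a/m - b/n| \geq 1/(mn)$ whenever $a/m \neq b/n$, together with a careful count of how many pairs $(a,b)$ with $(a,m)=(b,n)=1$ can bring these fractions within $\psi(m)/m + \psi(n)/n$ of each other. Managing this count uniformly in $m,n$ is the technical heart of the argument, and it is exactly where the extra limsup hypothesis is unavoidable in this classical approach: without it the overlap bound does not imply quasi-independence, which is precisely why the unconditional Duffin--Schaeffer conjecture remained open for eighty years until the work of Koukoulopoulos--Maynard.
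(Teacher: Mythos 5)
The paper does not prove Theorem~\ref{thm:classicalDS}; it is quoted from Duffin and Schaeffer's 1941 paper as background, so there is no in-paper proof to compare your proposal against. That said, your outline is essentially the standard modern presentation of the argument, and it is also exactly the skeleton the paper uses in \S\ref{sec:mainproof} for its \emph{own} main result (Theorem~\ref{thm:maintheorem}): realise the target as a $\limsup$ set, use a Chung--Erd\H{o}s/second-moment inequality to reduce to a quasi-independence bound, and invoke a zero--one law to upgrade positive measure to full measure. Your reduction to $\psi(n)<1/2$, the computation $|E_n|=2\varphi(n)\psi(n)/n$, and the observation that the second hypothesis controls the error term $\sum_{n\le N}\psi(n)$ along a subsequence where $\sum_{n\le N}\varphi(n)\psi(n)/n\to\infty$ are all correct and in the right order. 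One small anachronism: Gallagher's zero--one law postdates 1941, and the original Duffin--Schaeffer paper upgraded positive to full measure by a more direct argument; this is a legitimate modernisation, not an error.

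The genuine gap is the one you flag yourself: you state the overlap estimate
\[
\sum_{\substack{m,n\le N\\ m\neq n}}|E_m\cap E_n|\ll\Bigl(\sum_{n\le N}\tfrac{\varphi(n)\psi(n)}{n}\Bigr)^{2}+\sum_{n\le N}\psi(n)
\]
as a ``target'' and never derive it, deferring the Farey-fraction count that produces it. This count is the entire content of the theorem --- it is where the coprimality condition, the $1/(mn)$ separation of distinct reduced fractions, and the $\gcd(m,n)$ arithmetic all enter, and where a bound of this precise shape (with the lone $\sum\psi(n)$ error rather than something worse) must be extracted. As written, the proposal is a correct high-level map but not a proof; to make it one you would need to prove an explicit pairwise bound of the form $|E_m\cap E_n|\le |E_m|\,|E_n| + O\bigl(\psi(m)\psi(n)\gcd(m,n)/(mn)\bigr)$ (or a variant) and then carry out the double sum, which is where the hard work lives. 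It is also worth noting that the reason the paper takes a different route for its \emph{own} theorem --- the Pollington--Vaughan $P(\fm,\fn)$ bound of Proposition~\ref{prop:Pmnbound} together with the Erd\H{o}s $g$-function analysis --- is precisely to prove a Vaaler-type result without the second hypothesis, under the stronger pointwise assumption $\psi(n)\ll 1/n$; so the two overlap analyses are set up to remove different hypotheses and are not interchangeable.
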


In the same paper, they suggested that the second condition might be unnecessary, which over time became known as the \textit{Duffin--Schaeffer conjecture}:

\begin{conjecture}[Duffin--Schaeffer conjecture]
Let \(\psi\) be such that
\[
\sum_{n \in \NN} \frac{\varphi(n) \psi(n)}{n} = \infty. \]

Then the set \(A'(\psi)\) has Lebesgue measure \(1\).
\end{conjecture}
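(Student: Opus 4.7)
The plan is to follow the divergence form of the Borel--Cantelli framework. For each \(n\), define
\[ E_n = \bigcup_{\substack{1 \leq a \leq n \\ (a,n)=1}} \left[ \tfrac{a}{n} - \tfrac{\psi(n)}{n},\ \tfrac{a}{n} + \tfrac{\psi(n)}{n} \right] \cap [0,1], \]
so that \(A'(\psi) = \limsup_{n \to \infty} E_n\). Assuming without loss of generality that \(\psi(n) \leq 1/2\), a routine count gives \(|E_n| \asymp \varphi(n)\psi(n)/n\), so the divergence hypothesis is equivalent to \(\sum_n |E_n| = \infty\). By the Erd\H{o}s--Chung strengthening of Borel--Cantelli, it suffices to establish the quasi-independence estimate
\[ \sum_{m, n \leq N} |E_m \cap E_n| \ll \Bigl( \sum_{n \leq N} |E_n| \Bigr)^2 \]
for infinitely many \(N\); this yields \(|A'(\psi)| > 0\), and Gallagher's classical zero--one law then upgrades this to \(|A'(\psi)| = 1\).

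The pairwise overlap \(|E_m \cap E_n|\) is controlled by analysing when two reduced fractions \(a/m\) and \(b/n\) lie within \(\psi(m)/m + \psi(n)/n\) of one another. A short argument using \(|an - bm| \geq 1\) whenever \(an \neq bm\) forces the common divisor \(d = (m,n)\) to be large in the non-trivial regime, leading to a bound of the shape
\[ |E_m \cap E_n| \ll |E_m|\,|E_n| + \psi(m)\psi(n) \cdot \frac{d^{2}\,\varphi(m)\varphi(n)}{m^{2}n^{2}}. \]
The problem therefore reduces to controlling weighted GCD sums of the form \(\sum_{m,n \leq N} f(m)f(n)\, d^{2}/(mn)\) where \(f(n) = \psi(n)\varphi(n)/n\) is essentially arbitrary, so no multiplicative-function machinery is directly available; instead one must exploit the combinatorial structure of the map \((m,n) \mapsto (m,n)\).

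The main obstacle, and the technical heart of the argument, is precisely this GCD sum. I would follow the recent breakthrough of Koukoulopoulos and Maynard, which encodes the ``bad'' pairs \((m,n)\) (those for which \(|E_m \cap E_n|\) substantially exceeds the independent prediction \(|E_m||E_n|\)) as edges in a weighted bipartite \emph{GCD graph} whose vertex data remembers the prime factorisations of \(m\) and \(n\) and whose edges are stratified by the common prime factors. One then defines a \emph{quality} of a vertex measuring how anomalously many heavy edges pass through it, and iteratively prunes vertices of high quality, controlling the loss via a delicate compound inductive inequality relating the quality, the edge weight, and the number of distinct prime factors involved. The output of this anatomy-of-integers argument is that after pruning, the remaining pairs contribute the desired \(O\bigl((\sum |E_n|)^{2}\bigr)\), while the pruned contribution is absorbed; this is the step I expect to consume the overwhelming bulk of the work, since each of the preceding reductions is essentially standard once this key estimate is in hand.
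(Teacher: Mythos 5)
The paper does not attempt to prove the Duffin--Schaeffer conjecture at all: it is stated purely as background in a \texttt{conjecture} environment, with the explicit remark that it remained open for nearly eighty years until Koukoulopoulos and Maynard resolved it in 2019, and a citation to their \emph{Annals} paper. There is therefore no ``paper's own proof'' to compare your attempt against; the paper's actual contribution is the number-field analogue of Vaaler's partial result (the case \(\psi_{\rho}(\fn) \ll \Nm(\fn)^{-2/(s+t)}\)), not the full conjecture.

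As a blind attempt at the conjecture itself, your sketch correctly reproduces the high-level architecture of the Koukoulopoulos--Maynard argument: second-moment Borel--Cantelli plus Gallagher's zero-one law reduces the problem to quasi-independence; the Pollington--Vaughan overlap estimate reduces that to controlling a weighted GCD sum; and the GCD-graph construction with the ``quality'' functional and iterative pruning is exactly the engine they built to handle that sum. But as written this is a roadmap of their paper rather than a proof: the entire mathematical content of the theorem is compressed into the phrase ``a delicate compound inductive inequality,'' which in the original occupies the bulk of a long paper (the iteration lemma, the quality-increment inequality, the separate treatment of small and large primes, and the final compression argument). You are in effect invoking the theorem you set out to prove. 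That is a defensible thing to do for a result of this stature, but it should be flagged explicitly as a citation rather than presented as a self-contained argument. A smaller technical point: the displayed overlap bound
\[
|E_m \cap E_n| \ll |E_m|\,|E_n| + \psi(m)\psi(n)\cdot\frac{d^2\,\varphi(m)\varphi(n)}{m^2 n^2}
\]
is not the standard Pollington--Vaughan form. What one actually has (and what the present paper proves in the number-field setting as Proposition~\ref{prop:Pmnbound}) is the multiplicative bound \(|E_m \cap E_n| \ll P(m,n)\,|E_m|\,|E_n|\), where \(P(m,n)\) is a product of \((1-1/p)^{-1}\) over primes \(p \mid mn/(m,n)^2\) with \(p\) exceeding the threshold \(D(m,n)\); it is precisely this product, not the additive correction you wrote, that feeds into the GCD-sum analysis and makes the graph-theoretic machinery necessary.
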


This conjecture remained open for almost eighty years, until it was proven by Koukoulopoulos and Maynard in 2019 (see \cite{KoukoulopoulosMaynard2020}). However, in this paper we will be more interested in the work that predates this achievement, in which various partial results towards the conjecture were proven.

The particular result we will be interested in is a result from 1978, due to Vaaler. Previously, Erd\H{o}s had proven the following result (see \cite{Erdos1970}):

\begin{theorem}[Erd\H{o}s, 1970]
Suppose that there exists \(\varepsilon\) such that for every \(n \in \NN\), the value of the function \(\psi\) at \(n\) is either \(\frac{\varepsilon}{n}\) or \(0\). Then the Duffin--Schaeffer conjecture holds for \(\psi\).
\end{theorem}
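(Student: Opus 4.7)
The plan is to apply a Chung--Erd\H{o}s quasi-independence argument to the sets
\[
E_n := \bigcup_{\substack{1 \leq a \leq n \\ (a,n) = 1}} \left[\frac{a}{n} - \frac{\varepsilon}{n^2},\ \frac{a}{n} + \frac{\varepsilon}{n^2}\right] \cap [0,1],
\]
indexed over $n \in S := \{n \in \NN : \psi(n) = \varepsilon/n\}$; one has $A'(\psi) = \limsup_{n \in S} E_n$. The divergence hypothesis of the Duffin--Schaeffer conjecture becomes $\sum_{n \in S} \varphi(n)/n^2 = \infty$, and since the reduced fractions $a/n$ are $\geq 1/n$ apart, a direct count yields $\mu(E_n) \asymp \varepsilon\,\varphi(n)/n^2$ (for $\varepsilon$ small enough that the component intervals are disjoint; the general case reduces to this by rescaling).

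The main step is to establish a pairwise overlap bound of the form
\[
\sum_{\substack{m,n \in S \\ m,n \leq N}} \mu(E_m \cap E_n) \leq C \Bigl(\sum_{\substack{n \in S \\ n \leq N}} \mu(E_n)\Bigr)^2,
\]
with $C$ an absolute constant. To this end I would fix distinct $m, n \in S$, set $d = \gcd(m,n)$, $m = dm'$, $n = dn'$, and note that a point lies in $E_m \cap E_n$ only if there are reduced fractions $a/m$, $b/n$ with $|an - bm| \leq \varepsilon(m^2+n^2)/(mn)$. Because $d \mid an - bm$ and $an - bm \neq 0$ (as $m \neq n$), the possible values of $k := (an-bm)/d$ form a set of size $O(\varepsilon(m^2+n^2)/(dmn))$; for each $k$ the number of admissible coprime pairs $(a,b)$ can be estimated by a standard M\"obius inversion over the linear congruence. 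Each surviving pair of boxes contributes measure at most $2\varepsilon/\max(m,n)^2$. Summing and pulling the $d$-sum out should produce a bound with a multiplicative correction in $\gcd(m,n)$ that, after Euler-product manipulations and using $\sum_{d} \varphi(d)/d^2 < \infty$ in suitable weighted form, telescopes into the claimed quasi-independence.

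Once the overlap bound holds, the Chung--Erd\H{o}s inequality yields
\[
\mu\bigl(\limsup_{n \in S} E_n\bigr) \geq \limsup_{N \to \infty} \frac{\bigl(\sum_{n \in S,\, n \leq N} \mu(E_n)\bigr)^2}{\sum_{m,n \in S,\, m,n \leq N}\mu(E_m \cap E_n)} \geq \frac{1}{C} > 0,
\]
so $A'(\psi)$ has positive Lebesgue measure. To promote positive measure to full measure I would appeal to the standard zero--one law for $\limsup$-sets of unions of intervals centred at reduced fractions, which are invariant (up to null sets) under the rational translations $x \mapsto x + p/q \pmod 1$.

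The hard part is the overlap estimate, specifically handling pairs $(m,n)$ with large $\gcd(m,n)$, where the clusters of rationals $a/m$ and $b/n$ can align. This is precisely where Erd\H{o}s's hypothesis is indispensable: the fact that $\psi$ takes only one nonzero value $\varepsilon/n$ ties the box width rigidly to the denominator, so $\psi(m)\psi(n)$ factorises cleanly against the gcd-driven congestion. Without this rigidity the estimate breaks down, which is exactly why the full Duffin--Schaeffer conjecture resisted proof for another half-century.
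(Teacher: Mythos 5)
The paper does not prove this theorem of Erd\H{o}s; it states it (citing \cite{Erdos1970}) only as motivation for the stronger theorem of Vaaler, whose number-field generalisation is the paper's actual subject. So there is no paper proof to compare against directly, but the machinery the paper develops for Theorem~\ref{thm:maintheorem} is precisely what a modern proof of the Erd\H{o}s theorem requires, and against that standard your sketch has a genuine gap at the central step.

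Your framework is right: Chung--Erd\H{o}s on the sets \(E_n\), an overlap count from the congruence \(|an - bm| \leq \varepsilon(m^2+n^2)/(mn)\) with a M\"obius sieve for the coprimality conditions, then a zero--one law to pass from positive to full measure; and the estimate \(\mu(E_n) \asymp \varepsilon\varphi(n)/n^2\) is correct. The problem is the assertion that the M\"obius/Euler-product computation ``telescopes into the claimed quasi-independence'' with an absolute constant \(C\). It does not. What the count actually delivers is the Pollington--Vaughan bound \(\mu(E_m \cap E_n) \ll P(m,n)\,\mu(E_m)\,\mu(E_n)\) with
\[
P(m,n) = \prod_{\substack{p \mid \frac{mn}{(m,n)^2} \\ p > D(m,n)}}\left(1 - \frac{1}{p}\right)^{-1}, \qquad D(m,n) = \frac{mn}{(m,n)}\max\bigl(\psi(m),\psi(n)\bigr),
\]
and \(P(m,n)\) is \emph{not} uniformly bounded; it can grow like a power of \(\log\log n\). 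The argument is saved by sparsity, not smallness. One must bring in Erd\H{o}s's \(g\)-function (the classical version of Definition~\ref{def:gfct}), set \(\tau = \max\{g(\tilde m), g(\tilde n)\}\) with \(\tilde m = m/(m,n)\), \(\tilde n = n/(m,n)\), and split: if \(D(m,n) \geq \tau\), then \(P(m,n) \ll 1\) directly from the definition of \(g\); if \(D(m,n) < \tau\), then only \(P(m,n) \ll \log 2\tau\) holds, and one must show such pairs are rare via the counting estimates \(\#\{n \leq X : g(n) \geq T\} \ll X/T!\) and \(\sum_{d \mid q,\; g(q/d) \leq T} d^{-1} \ll \log 2T\) (the classical forerunners of Lemmas~\ref{lem:glemma1} and~\ref{lem:divisorsum}). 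The hypothesis \(\psi(n) \in \{0, \varepsilon/n\}\) is used precisely to reduce \(D(m,n)\) to \(\varepsilon\max(\tilde m, \tilde n)\), which makes this case split tractable. Your sketch gestures at this last point (``the box width is rigidly tied to the denominator'') but supplies none of the \(g\)-function machinery, and without it the overlap bound does not close; that machinery is the real content of Erd\H{o}s's proof.
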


Vaaler then went on to strengthen this (see \cite{Vaaler1978}), and it is this theorem we would like to generalise:

\begin{theorem}[Vaaler, 1978]
\label{thm:classicalVaaler}
Suppose \(\psi(n) = O(\frac{1}{n})\). Then the Duffin--Schaeffer conjecture holds for \(\psi\).
\end{theorem}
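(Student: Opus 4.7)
The plan is to adapt the standard blueprint for attacking the Duffin--Schaeffer conjecture: combine a zero--one law for \(|A'(\psi)|\) with a second-moment (quasi-independence) variant of the Borel--Cantelli lemma, and use the hypothesis \(\psi(n) = O(1/n)\) to control the overlaps that arise in the second-moment estimate. First, for each \(n\) set
\[
A(n) = \bigcup_{\substack{0 \leq a \leq n \\ (a,n)=1}} \left[ \frac{a}{n} - \frac{\psi(n)}{n},\ \frac{a}{n} + \frac{\psi(n)}{n} \right] \cap [0,1],
\]
so that \(A'(\psi) = \limsup_n A(n)\) and \(|A(n)| \asymp \varphi(n)\psi(n)/n\); in particular \(\sum_n |A(n)| = \infty\) by hypothesis. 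By Gallagher's zero--one law, \(|A'(\psi)| \in \{0,1\}\), so it suffices to prove \(|A'(\psi)| > 0\).

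I would then apply the Paley--Zygmund / Erd\H{o}s--Rényi form of Borel--Cantelli,
\[
|\limsup_n A(n)| \geq \limsup_{N \to \infty} \frac{\bigl( \sum_{n \leq N} |A(n)| \bigr)^2}{\sum_{m,n \leq N} |A(m) \cap A(n)|},
\]
reducing the problem to the quasi-independence bound \(\sum_{m,n \leq N} |A(m) \cap A(n)| \ll \bigl( \sum_{n \leq N} |A(n)| \bigr)^2\). The diagonal contribution \(m = n\) is harmless, so everything comes down to estimating \(|A(m) \cap A(n)|\) for \(m \neq n\).

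For the overlap estimate I would use the arithmetic observation that if \(a/n \neq b/m\) are reduced fractions and \(d = \gcd(m,n)\), then \(|a/n - b/m| \geq d/(mn) = 1/\lcm(m,n)\). Each approximation interval around \(a/n\) has half-width \(\psi(n)/n \leq C/n^2\), so two such intervals from \(A(m)\) and \(A(n)\) can intersect only when the pair \((a,b)\) satisfies a congruence condition modulo \(d\). Counting these pairs and multiplying by the width of a single intersection yields an estimate of the shape
\[
|A(m) \cap A(n)| \ll \frac{\varphi(m)\varphi(n)\psi(m)\psi(n)}{mn} + E(m,n),
\]
where the main term matches \(|A(m)||A(n)|\) and \(E(m,n)\) is an error coming from pairs with large \(\gcd(m,n)\). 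Summing the error over \(m,n \leq N\) and using standard divisor-sum identities (e.g.\ \(\sum_{d \mid k} \varphi(d) = k\)) should produce \(o\bigl((\sum_{n \leq N}|A(n)|)^2\bigr)\), closing the argument.

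The main obstacle will be Step~3, the overlap estimate: one has to split into cases according to the size of \(d = \gcd(m,n)\) and exploit the linear decay \(\psi(n) \leq C/n\) sharply in the regime where \(d\) is comparable to \(\min(m,n)\), which is precisely where naive bounds fail. This is where Vaaler's hypothesis pays off: the bound \(\psi(n) = O(1/n)\) provides exactly enough savings per pair to offset the combinatorial blow-up coming from shared divisors, and this delicate balance is what replaces the extra technical input (such as the ratio condition of Theorem~\ref{thm:classicalDS}) that earlier partial results required.
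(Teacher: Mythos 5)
Your blueprint (zero--one law, second-moment Borel--Cantelli, reduce to a quasi-independence bound for \(\sum_{m,n}|A(m)\cap A(n)|\)) is the correct frame, and it matches the frame the paper uses for its number-field analogue. The gap is in Step~3: the overlap estimate you write down,
\[
|A(m)\cap A(n)| \ll \frac{\varphi(m)\varphi(n)\psi(m)\psi(n)}{mn} + E(m,n),
\]
with the idea that the error can be killed by divisor-sum identities, is precisely what Duffin and Schaeffer tried, and it is not strong enough --- that is why they needed the extra \(\limsup\) hypothesis in Theorem~\ref{thm:classicalDS}. The correct shape of the overlap bound (Pollington--Vaughan) is multiplicative, not additive:
\[
|A(m)\cap A(n)| \ll P(m,n)\,|A(m)|\,|A(n)|, \qquad P(m,n)=\prod_{\substack{p\mid mn/(m,n)^2\\ p> D(m,n)}}\Bigl(1-\frac{1}{p}\Bigr)^{-1},
\]
where \(D(m,n)\) depends on \(\psi\). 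The factor \(P(m,n)\) is not bounded in general, and the crux of Vaaler's proof is controlling it. Proving this bound already requires a sieve estimate (a Brun/Selberg-type bound for integers coprime to a given modulus in short boxes), which your pair-counting sketch does not produce.

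The second, and more fundamental, missing ingredient is Erd\H{o}s's \(g\)-function. To sum \(P(m,n)|A(m)||A(n)|\) over pairs, one introduces \(g(n)\) as the smallest \(v\) with \(\sum_{p\mid n,\ p>v}1/p<1/2\), shows \(P(m,n)\ll\log 2\tau\) with \(\tau=\max\{g(m/(m,n)),g(n/(m,n))\}\), and then exploits the combinatorial facts \(\#\{n\leq X: g(n)\geq v\}\ll X/v!\) and \(\sum_{d\mid q,\ g(q/d)\leq v}1/d\ll\log 2v\). The hypothesis \(\psi(n)=O(1/n)\) is what makes \(D(m,n)\) small enough that the only dangerous pairs are those with \(g\) large --- and those are so sparse (factorial decay) that the sum converges. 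Your proposal invokes ``this is where Vaaler's hypothesis pays off'' but supplies no mechanism; without the \(g\)-function dichotomy and its counting lemmas there is no way to close the argument, because in the regime \(d=\gcd(m,n)\asymp\min(m,n)\) the naive divisor-sum bound gives a \(\log\)-loss per pair and the total diverges.
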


In previous work (see \cite{Palmer2020}), the author developed a version of Diophantine approximation in general number fields, expanding on work of Nakada and Wagner (see \cite{NakadaWagner1991}) in imaginary quadratic fields, and proved a version of Theorem~\ref{thm:classicalDS} in this setup. In this paper, we prove a version of Theorem~\ref{thm:classicalVaaler} in the same setup. (A version in imaginary quadratic fields, using Nakada and Wagner's setup, was previously proven by Chen in \cite{Chen2015}.) We take a slightly more modern approach to that in \cite{Vaaler1978}, using methods found in \cite{PollingtonVaughan1990} and in \cite{HarmanBook}.

In \S\ref{sec:setup}, we reiterate the setup developed in \cite{Palmer2020}, and use this to state our main result. In \S\ref{sec:mainproof}, we provide a proof of the main theorem, modulo some major results which we defer to later sections. In \S\ref{sec:Pmn}, we prove a version of Pollington and Vaughan's \(P(m,n)\) bound from \cite{PollingtonVaughan1990}, modulo a certain sieve-theoretic result, and in \S\ref{sec:sievetheory}, we establish this sieve-theoretic result. Finally, in \S\ref{sec:NFgfct}, we define the natural equivalent of Erd\H{o}s's \(g\)-function in number fields, and establish the necessary properties of this function.


\section{Setup and statement of result}
\label{sec:setup}
This setup was developed in \cite{Palmer2020}; we reproduce it here for completeness.

Let \(K\) be a number field of degree \(n\). Let \(\cO_K\) denote its ring of integers, and let \(I_K\) denote the semigroup of integral ideals of \(\cO_K\). We define the norm function and the Euler \(\Phi\)-function on \(I_K\) by
\defiso{\Nm}{I_K}{\NN}{\fn}{\#(\cO_K / \fn \cO_K)}
and
\defiso{\Phi}{I_K}{\NN}{\fn}{\#((\cO_K / \fn \cO_K)^{\times}).}

The symbol \(\fp\) will always denote a prime ideal of \(\cO_K\), and hence a sum of the form
\[
\sum_{\fp} *
\]
should be understood as a sum over the prime ideals of \(\cO_K\).

Here we cite the following results for certain sums and products relating to prime ideals, which are analogues of the classical results due to Mertens (see \cite{Mertens1874}), and which we will refer to frequently. (These are Lemmas 2.3 and 2.4 and Theorem 2 respectively in \cite{Rosen1999}.)

\begin{theorem}\label{thm:Merten1}
Let \(K\) be an algebraic number field. Then
\[
\sum_{\Nm(\fp) < X} \frac{\log(\Nm(\fp))}{\Nm(\fp)} = \log X + O_K(1).\]
\end{theorem}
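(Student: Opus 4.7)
The plan is to mimic the classical proof of Mertens' first theorem by evaluating the sum $\sum_{\Nm(\fa) \leq X} \log \Nm(\fa)$, where $\fa$ ranges over the integral ideals of $\cO_K$, in two different ways and comparing them. The crucial input is Landau's ideal counting theorem, which states
\[
\#\{\fa \in I_K : \Nm(\fa) \leq Y\} = \kappa_K Y + O_K(Y^{1-1/n})
\]
for some constant $\kappa_K = \kappa_K(K) > 0$.

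On the one hand, partial summation applied directly to the ideal counting estimate yields
\[
\sum_{\Nm(\fa) \leq X} \log \Nm(\fa) = \kappa_K X \log X - \kappa_K X + O_K(X^{1-1/n} \log X).
\]
On the other hand, introducing the number-field von Mangoldt function $\Lambda_K(\fd) := \log \Nm(\fp)$ if $\fd = \fp^k$ is a prime-power ideal and $\Lambda_K(\fd) := 0$ otherwise, the divisor identity $\log \Nm(\fa) = \sum_{\fd \mid \fa} \Lambda_K(\fd)$ holds. Swapping the order of summation and applying the ideal counting estimate to the inner count $\#\{\fb \in I_K : \Nm(\fb) \leq X/\Nm(\fd)\}$ gives
\[
\sum_{\Nm(\fa) \leq X} \log \Nm(\fa) = \kappa_K X \sum_{\Nm(\fd) \leq X} \frac{\Lambda_K(\fd)}{\Nm(\fd)} + E,
\]
where $E = O_K\bigl(X^{1-1/n} \sum_{\Nm(\fd) \leq X} \Lambda_K(\fd) \Nm(\fd)^{-(1-1/n)}\bigr)$.

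To bound $E$, I would invoke the elementary Chebyshev-type bound $\sum_{\Nm(\fd) \leq X} \Lambda_K(\fd) = O_K(X)$, which follows from the observation that each rational prime has at most $n$ prime ideals of $\cO_K$ above it. Partial summation then gives $\sum_{\Nm(\fd) \leq X} \Lambda_K(\fd) \Nm(\fd)^{-(1-1/n)} = O_K(X^{1/n})$, so $E = O_K(X)$. Equating the two evaluations and dividing by $\kappa_K X$ yields $\sum_{\Nm(\fd) \leq X} \Lambda_K(\fd)/\Nm(\fd) = \log X + O_K(1)$. The conclusion of the theorem follows upon discarding the contribution from higher prime powers ($k \geq 2$), which is bounded absolutely by $\sum_{\fp} \log \Nm(\fp)/(\Nm(\fp)^2 - \Nm(\fp)) \ll_K \sum_p \log p / p^2 < \infty$.

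The main obstacle is ensuring that the $O_K(Y^{1-1/n})$ error from the ideal counting theorem, when accumulated over the convolution sum, does not swamp the main term of size $\kappa_K X \log X$. This is precisely where the Chebyshev-type bound is essential: a naive estimate of $E$ using $\Lambda_K(\fd) \leq \log X$ would be much too large. The bound $\psi_K(X) = O_K(X)$ provides exactly the cancellation needed to keep $E = O_K(X)$, i.e., a full factor of $\log X$ below the main term, after which the result drops out cleanly.
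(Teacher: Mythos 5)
The paper does not supply its own proof of this theorem: it is quoted directly from Rosen's paper (Lemma 2.3 of \cite{Rosen1999}), as stated in the sentence preceding Theorems~\ref{thm:Merten1}--\ref{thm:Merten3}. There is therefore no in-paper argument to compare against, only the citation.

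Your proof is correct and is the standard translation of Mertens' classical argument to number fields: evaluate $\sum_{\Nm(\fa) \leq X} \log \Nm(\fa)$ both by partial summation against Landau's ideal count $\#\{\fa : \Nm(\fa) \leq Y\} = \kappa_K Y + O_K(Y^{1-1/n})$ and via the convolution identity $\log \Nm(\fa) = \sum_{\fd \mid \fa} \Lambda_K(\fd)$, then compare. The one step that deserves a word more of justification is the Chebyshev-type bound $\sum_{\Nm(\fd) \leq X} \Lambda_K(\fd) = O_K(X)$. Your remark that each rational prime $p$ has at most $n$ primes of $\cO_K$ above it is indeed the essential number-field observation, but it must be combined with the classical Chebyshev bound $\psi(X) = O(X)$. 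Concretely, writing $\Nm(\fp) = p^{f_\fp}$ and grouping prime-power ideals by the rational prime power $p^m$ (with $m = f_\fp k$), the inner multiplicity is $\sum_{\fp \mid p,\, f_\fp \mid m} f_\fp \leq \sum_{\fp \mid p} f_\fp \leq n$, so
\[
\sum_{\Nm(\fp)^k \leq X} \log \Nm(\fp) \;\leq\; n \sum_{p^m \leq X} \log p \;=\; n\,\psi(X) \;\ll_K\; X.
\]
With that made explicit, the partial-summation estimate $\sum_{\Nm(\fd) \leq X} \Lambda_K(\fd) \Nm(\fd)^{-(1-1/n)} = O_K(X^{1/n})$ and the removal of the higher prime-power contribution both go through, and the argument is sound.
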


\begin{theorem}\label{thm:Merten2}
Let \(K\) be an algebraic number field. Then there is a constant \(B_K\) such that
\[
\sum_{\Nm(\fp) \leq X} \frac{1}{\Nm(\fp)} = \log \log X + B_K + O_K\left(\frac{1}{\log X} \right).\]
\end{theorem}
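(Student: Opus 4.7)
The plan is to derive Theorem~\ref{thm:Merten2} from Theorem~\ref{thm:Merten1} by Abel summation, mirroring the classical derivation of Mertens' second theorem from his first. Set
\[
T(X) := \sum_{\Nm(\fp) \leq X} \frac{\log \Nm(\fp)}{\Nm(\fp)},
\]
so that Theorem~\ref{thm:Merten1} reads $T(X) = \log X + E(X)$ with $E(X) = O_K(1)$. Writing $\frac{1}{\Nm(\fp)} = \frac{\log \Nm(\fp)}{\Nm(\fp)} \cdot \frac{1}{\log \Nm(\fp)}$ and applying partial summation with weights $\frac{\log \Nm(\fp)}{\Nm(\fp)}$ and test function $\frac{1}{\log t}$ gives
\[
\sum_{2 \leq \Nm(\fp) \leq X} \frac{1}{\Nm(\fp)} = \frac{T(X)}{\log X} + \int_2^X \frac{T(t)}{t (\log t)^2}\diff{t}.
\]

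Next I would substitute $T(t) = \log t + E(t)$ into both terms, separating a main contribution
\[
\frac{\log X}{\log X} + \int_2^X \frac{\diff{t}}{t \log t} = 1 + \log \log X - \log \log 2
\]
from an error contribution $\frac{E(X)}{\log X} + \int_2^X \frac{E(t)}{t (\log t)^2}\diff{t}$. Since $E(t) = O_K(1)$, the integrand $\frac{E(t)}{t (\log t)^2}$ is absolutely integrable on $[2, \infty)$, so I can rewrite the integral as $C_K - \int_X^\infty \frac{E(t)}{t (\log t)^2}\diff{t}$ where the tail is bounded by $\int_X^\infty \frac{O_K(1)}{t (\log t)^2}\diff{t} = O_K\!\left(\frac{1}{\log X}\right)$. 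Together with $\frac{E(X)}{\log X} = O_K\!\left(\frac{1}{\log X}\right)$, this produces the claimed error term.

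Finally I would collect $1 - \log \log 2 + C_K$, together with the bounded contribution from the (at most $n$) prime ideals of norm less than $2$, into a single constant $B_K$, completing the proof. The only step that requires any care is justifying the convergence and tail-bound for $\int_2^\infty \frac{E(t)}{t (\log t)^2}\diff{t}$, and this is immediate from $E(t) = O_K(1)$; no number-field-specific difficulty arises once Theorem~\ref{thm:Merten1} is in hand, which is why I expect this essentially classical partial-summation argument to go through verbatim.
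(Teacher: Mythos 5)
The paper does not prove Theorem~\ref{thm:Merten2}: it is explicitly cited from Rosen~\cite{Rosen1999} (Lemma~2.4 there), so there is no in-paper proof for your argument to be compared against. That said, your derivation from Theorem~\ref{thm:Merten1} by Abel summation is correct, complete, and is exactly the standard route Rosen himself takes (it is the verbatim number-field transcription of the classical derivation of Mertens' second theorem from his first). The decomposition $T(t) = \log t + E(t)$, the partial-summation identity
\[
\sum_{\Nm(\fp) \leq X} \frac{1}{\Nm(\fp)} = \frac{T(X)}{\log X} + \int_2^X \frac{T(t)}{t(\log t)^2}\,\diff t,
\]
the extraction of the main term $1 + \log\log X - \log\log 2$, and the observation that $\int_2^\infty \tfrac{E(t)}{t(\log t)^2}\diff t$ converges absolutely so that its tail contributes $O_K(1/\log X)$, are all in order, and the resulting constant $B_K = 1 - \log\log 2 + \int_2^\infty \tfrac{E(t)}{t(\log t)^2}\diff t$ is well defined. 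One small irrelevance: you mention a contribution from prime ideals of norm less than~$2$, but there are none, since every prime ideal has norm $p^f \geq 2$ for some rational prime $p$; the sum genuinely starts at norm~$2$ and no such fudge term is needed.
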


\begin{theorem}\label{thm:Merten3}
Let \(K\) be an algebraic number field, and let \(\alpha_K\) be the residue of the associated Dedekind zeta function \(\zeta_K(s)\) at \(s=1\). Then
\[
\prod_{\Nm(\fp) \leq X} \left( 1 - \frac{1}{\Nm(\fp)} \right)^{-1} = e^{\gamma} \alpha_K \log X + O_K(1).\]
\end{theorem}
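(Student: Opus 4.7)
My plan is to take logarithms, apply Theorem~\ref{thm:Merten2} to the first-order term, and pin down the leading constant via the residue of the Dedekind zeta function at $s = 1$. The starting point is the Taylor expansion $-\log(1-t) = \sum_{k \geq 1} t^k/k$ applied to $t = 1/\Nm(\fp)$, which gives
\[
\log \prod_{\Nm(\fp) \leq X} \left(1 - \frac{1}{\Nm(\fp)}\right)^{-1} = \sum_{\Nm(\fp) \leq X} \frac{1}{\Nm(\fp)} + \sum_{\Nm(\fp) \leq X} \sum_{k \geq 2} \frac{1}{k \Nm(\fp)^k}.
\]
Theorem~\ref{thm:Merten2} handles the first sum directly. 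In the second, the inner sum is $O(1/\Nm(\fp)^2)$, and since $\sum_\fp 1/\Nm(\fp)^2 \leq \zeta_K(2) < \infty$ the double sum converges absolutely to some constant $D_K$ as $X \to \infty$, with tail $O_K(1/X)$. Exponentiating then yields
\[
\prod_{\Nm(\fp) \leq X} \left(1 - \frac{1}{\Nm(\fp)}\right)^{-1} = e^{B_K + D_K} \log X + O_K(1).
\]

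The main obstacle is then to identify $e^{B_K + D_K}$ with $e^{\gamma} \alpha_K$; this is where the residue $\alpha_K$ must enter, since nothing in Theorem~\ref{thm:Merten2} itself references it. I would do this by comparing two expressions for $\log \zeta_K(s)$ as $s \to 1^+$. On the one hand, since $\zeta_K$ has a simple pole with residue $\alpha_K$ at $s = 1$, we have $\log \zeta_K(s) = \log(1/(s-1)) + \log \alpha_K + o(1)$. On the other hand, the Euler product gives
\[
\log \zeta_K(s) = \sum_{\fp} \frac{1}{\Nm(\fp)^s} + \sum_{\fp} \sum_{k \geq 2} \frac{1}{k \Nm(\fp)^{ks}},
\]
where the double sum tends to $D_K$ by monotone convergence. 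For the first sum, an Abel summation argument based on Theorem~\ref{thm:Merten2} gives $\sum_{\fp} \Nm(\fp)^{-s} = \log(1/(s-1)) + B_K - \gamma + o(1)$; the constant $\gamma$ enters through the identity $\int_0^\infty e^{-u} \log u \, du = -\gamma$, which appears when evaluating $(s-1) \int_2^\infty t^{-s} \log \log t \, dt$ via the substitution $u = (s-1) \log t$. Equating both expressions for $\log \zeta_K(s)$ and cancelling the divergent $\log(1/(s-1))$ gives $\log \alpha_K = B_K + D_K - \gamma$, hence $e^{B_K + D_K} = e^{\gamma} \alpha_K$, as required.
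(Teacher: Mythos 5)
The paper does not prove this theorem: it states it (together with Theorems~\ref{thm:Merten1} and \ref{thm:Merten2}) as a cited result, attributing it to Theorem 2 of \cite{Rosen1999}, so there is no in-paper proof against which to compare. Your argument is nonetheless a correct and self-contained proof. The first half --- expanding the logarithm, invoking Theorem~\ref{thm:Merten2} for the linear term, observing that $\sum_{\fp}\sum_{k\geq 2}\frac{1}{k\Nm(\fp)^k}$ converges (dominated by $\zeta_K(2)$) with tail $O_K(1/X)$ (using that there are at most $[K:\QQ]$ primes of a given norm), and exponentiating --- cleanly yields $e^{B_K+D_K}\log X + O_K(1)$. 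The second half, pinning down $e^{B_K+D_K} = e^{\gamma}\alpha_K$ by comparing the Euler-product and pole expansions of $\log\zeta_K(s)$ as $s\to 1^+$, is exactly the right mechanism for making the residue appear, and your Abel-summation calculation producing $\sum_{\fp}\Nm(\fp)^{-s} = \log\frac{1}{s-1} + B_K - \gamma + o(1)$ via the substitution $u=(s-1)\log t$ and the identity $\int_0^{\infty} e^{-u}\log u\,\diff{u} = -\gamma$ checks out (the error term $O(1/\log t)$ from Theorem~\ref{thm:Merten2} contributes $o(1)$ after multiplying by $(s-1)$, as needed). This is essentially the standard derivation of Mertens' third theorem from the second, adapted to the number field setting.
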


Suppose that our number field \(K\) has \(s\) real embeddings and \(t\) pairs of complex embeddings (where \(s + 2t = n\)); we denote these by \(\sigma_1, \ldots, \sigma_s\) and \(\tau_1,\ldots,\tau_t\) respectively. We denote the set of all embeddings of \(K\) by \(\Sigma\), and denote a generic embedding by \(\rho\).

We also define \(|\cdot|_{\RR}\) to be the standard real absolute value, and \(|\cdot|_{\CC}\) to be the square of the standard complex absolute value. Then we define \(|\cdot|_{\rho}\) to be either \(|\cdot|_{\RR}\) if \(\rho\) is real, or \(|\cdot|_{\CC}\) if \(\rho\) is complex. When taking the absolute value of something explicitly involving \(\rho\), we will often just assume that the absolute value is with respect to \(\rho\), and suppress the subscript for brevity (writing \( |\rho(\alpha)|\) in place of \( |\rho(\alpha)|_{\rho}\), for example).

For any element \(\gamma \in K\), we define the norm \(\Nm(\gamma)\) of \(\gamma\) by
\[
\Nm(\gamma) = \prod_{\sigma \hbox{\scriptsize \ real}} \sigma(\gamma) \prod_{\tau \hbox{\scriptsize \ complex}} \tau(\gamma) \overline{\tau(\gamma)}. \]

(Note this agrees with the ideal norm up to sign: that is to say, the absolute value of the norm of an element \(\gamma\) is the same as the ideal norm of the principal ideal generated by \(\gamma\).) We identify each element of \(K\) with an element of \(\RR^s \times \CC^t\) by embedding it into each of its completions. That is to say, we define a map \(\iota : K \to \RR^s \times \CC^t\) by
\[
\iota(\alpha) = (\sigma_1(\alpha), \ldots, \sigma_s(\alpha), \tau_1(\alpha), \ldots, \tau_t(\alpha)). \]

The image \(\iota(\cO_K)\) of \(\cO_K\) under this map forms a lattice in \(\RR^s \times \CC^t\). If we let \(\beta_1,\ldots,\beta_n\) be an integral basis for \(\cO_K\), we have a fundamental domain for this lattice given by
\[
D_K = \Big\{ t_1 \iota(\beta_1) + \cdots + t_n \iota(\beta_n) \ \Big| \ t_1,\ldots,t_n \in [0,1] \Big\}.\]

We have a measure \(\lambda\) on \(D_K\) induced by the Lebesgue measure on \(\RR^s \times \CC^t\).

As a result of our diagonal embedding \(\iota\) of \(K\) into \(\RR^s \times \CC^t\), we can index the components of an element \(\x \in \RR^s \times \CC^t\) by the embeddings of \(K\). That is, we can write
\[
\x = (x_1,\ldots,x_s,x_{s+1},\ldots,x_{s+t}) = (x_{\sigma_1},\ldots,x_{\sigma_s},x_{\tau_1},\ldots,x_{\tau_t}). \]

Then for any embedding \(\rho \in \Sigma\), we can refer to the \(\rho\)-coordinate \(x_{\rho}\) of an element \(\x \in \RR^s \times \CC^t\).

By Dirichlet's unit theorem, the group of units of \(\cO_K\) has rank \(\ell = s+t-1\). That is to say, there exist a set of multiplicatively independent elements \(\{u_1,\ldots,u_{\ell}\} \subset \cO_K^{\times}\) such that any element \(u \in \cO_K^{\times}\) can be written as
\[
u = \zeta u_1^{n_1} \cdots u_{\ell}^{n_{\ell}},
\]
where \(\zeta\) is some root of unity in \(\cO_K^{\times}\). We call such a set \(\{u_i\}\) a system of fundamental units of \(K\).

For each embedding \(\rho \in \Sigma\), we choose a function \(\psi_{\rho} : I_K \to \RR_{\geq 0}\). We combine these into one function \(\PSI\) by defining
\defiso{\PSI}{I_K}{\RR_{\geq 0}^{s+t}}{\fn}{\bigoplus_{\rho \in \Sigma} \psi_{\rho}(\fn).}

We also define a function \(\Psi : I_K \to \RR_{\geq 0}\) by
\[
\Psi(\fn) = \prod_{\rho \in \Sigma} \psi_{\rho}(\fn).\]

For any element \(\gamma \in K\), we have a unique way of writing \((\gamma) = \frac{\fa}{\fn}\) with \(\fa,\fn \in I_K\) and \((\fa,\fn) = 1\). Then we write \(\dnm \gamma = \fn\).

For \(\x \in \RR^s \times \CC^t\), we say that \(\gamma \in K\) is a \(\PSI\)-good approximation to \(\x\) if we have
\[
|x_{\rho} - \rho(\gamma)| \leq \psi_{\rho}(\dnm(\gamma))
\]
for each \(\rho \in \Sigma\). We then define a set \(\cA(\PSI)\) by
\[
\cA({\PSI}) = \rset{\x \in D_K}{\begin{array}{c}\hbox{there exist infinitely many $\gamma \in K$ such} \\ \hbox{that $\gamma$ is a $\PSI$-good approximation to \(\x\)} \end{array}}.\]

We also need to introduce the following technical condition: we call the function \(\PSI\) \textit{balanced} if for all subsets \(\Sigma' \subset \Sigma\) with \(2 |\Sigma'| = |\Sigma|\), we have that
\[
\frac{\displaystyle \prod_{\rho \in \Sigma'} \psi_{\rho}(\fn)}{\displaystyle \prod_{\rho \in \Sigma \backslash \Sigma'} \psi_{\rho}(\fn)} \ll 1.\]

For example, if \(K\) is a real quadratic field, then we are requiring that \(\frac{\psi_1}{\psi_2}\) and \(\frac{\psi_2}{\psi_1}\) are bounded (or equivalently that \(\frac{\psi_1}{\psi_2}\) is bounded away from \(0\) and \(\infty\)). Note that any function on a number field with an \textit{odd} number of embeddings is \textit{trivially} balanced.

With this setup done, our version of Vaaler's theorem is as follows:

\begin{theorem}\label{thm:maintheorem}
If \(\PSI\) is a balanced function such that
\[
\sum_{\fn \in I_K} \Phi(\fn) \Psi(\fn) = \infty
\]
and
\begin{equation}
\label{eq:llcondition}
\psi_{\rho}(\fn) \ll \frac{1}{\Nm(\fn)^{\frac{2}{s+t}}},
\end{equation}
for each \(\rho \in \Sigma\), then \(\cA(\PSI)\) has measure \(\lambda(D_K)\).
\end{theorem}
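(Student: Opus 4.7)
The plan is to adapt the Pollington--Vaughan approach to Vaaler's theorem to the number-field setting, using the three technical ingredients that are established in later sections: the $P(\fm, \fn)$-type bound on pairwise intersections (\S\ref{sec:Pmn}), its underlying sieve estimate (\S\ref{sec:sievetheory}), and a number-field analogue of Erd\H{o}s's $g$-function (\S\ref{sec:NFgfct}). First I would recast $\cA(\PSI)$ as a limsup set: for each $\fn \in I_K$ let $\cA_\fn(\PSI) \subset D_K$ be the set of $\x \in D_K$ admitting a $\PSI$-good approximation $\gamma \in K$ with $\dnm \gamma = \fn$, so that $\cA(\PSI) = \limsup_{\Nm(\fn) \to \infty} \cA_\fn(\PSI)$. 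Each $\cA_\fn(\PSI)$ is a union of (approximate) translates of a box of volume $\asymp \Psi(\fn)$, one for each of the $\asymp \Phi(\fn)$ permissible numerators, so $\lambda(\cA_\fn(\PSI)) \asymp \Phi(\fn) \Psi(\fn)$, and the divergence hypothesis immediately gives $\sum_\fn \lambda(\cA_\fn(\PSI)) = \infty$.

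The Chung--Erd\H{o}s form of the divergence Borel--Cantelli lemma then reduces $\lambda(\cA(\PSI)) > 0$ to establishing a quasi-independence estimate
\[
\sum_{\Nm(\fm), \Nm(\fn) \leq X} \lambda\bigl( \cA_\fm(\PSI) \cap \cA_\fn(\PSI) \bigr) \ll \left( \sum_{\Nm(\fn) \leq X} \lambda(\cA_\fn(\PSI)) \right)^2
\]
along an unbounded sequence of $X$. To prove this I would apply the $P(\fm, \fn)$ bound of \S\ref{sec:Pmn} in the form $\lambda(\cA_\fm \cap \cA_\fn) \ll \lambda(\cA_\fm) \lambda(\cA_\fn) \cdot P(\fm, \fn)$, and then split the double sum using the number-field $g$-function of \S\ref{sec:NFgfct}: pairs $(\fm, \fn)$ with both $g(\fm)$ and $g(\fn)$ below a suitable threshold should contribute $O\bigl( (\sum \lambda(\cA_\fn))^2 \bigr)$ by the Mertens-type estimates of Theorems~\ref{thm:Merten1}--\ref{thm:Merten3}, while the exceptional pairs are controlled by the $g$-function's tail estimates, with the growth hypothesis $\psi_\rho(\fn) \ll \Nm(\fn)^{-2/(s+t)}$ and the balance condition used to keep the relevant products in line across all embeddings. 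Finally, to promote $\lambda(\cA(\PSI)) > 0$ to the full measure $\lambda(D_K)$, I would invoke a zero--one law in the spirit of Cassels and Gallagher, using the invariance of $\cA(\PSI)$ under translation by the lattice $\iota(\cO_K) \subset \RR^s \times \CC^t$.

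The main obstacle will be the quasi-independence step: the exponent $2/(s+t)$ and the balance condition are tuned precisely so that the multiplicative $P(\fm, \fn)$ bound and the $g$-function dichotomy fit together across all $s+t$ embeddings simultaneously, and threading these estimates through the case analysis --- in particular ensuring that the exceptional $g$-function contribution really is absorbed rather than dominating --- is where the genuine work lies and where the specific hypotheses of Theorem~\ref{thm:maintheorem} earn their keep.
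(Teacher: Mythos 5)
Your high-level architecture matches the paper's: recast $\cA(\PSI)$ as a limsup of the sets $\cA_\fn$, observe $\lambda(\cA_\fn) \asymp \Phi(\fn)\Psi(\fn)$ so that the divergence hypothesis feeds the Chung--Erd\H{o}s inequality, apply the $P(\fm,\fn)$ bound of Proposition~\ref{prop:Pmnbound}, split the double sum via the $g$-function, and finish with the zero--one law. That is exactly the skeleton of \S\ref{sec:mainproof}.

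However, the core of the theorem --- establishing the quasi-independence estimate --- is left as a gesture, and the gesture mischaracterizes the actual mechanism. You describe the split as ``pairs with both $g(\fm)$ and $g(\fn)$ below a suitable threshold'' versus ``exceptional pairs controlled by the $g$-function's tail estimates,'' but the relevant dichotomy (Lemma~\ref{lem:overlapconditions}) is not a fixed threshold on $g$-values: it compares $D(\fm,\fn)$ with $\tau(\fm,\fn) = \max\{g(\tilde\fm), g(\tilde\fn)\}$, and the whole problem reduces to bounding the sum over pairs with $\tfrac{1}{2^{s+t}} < D(\fm,\fn) < \tau(\fm,\fn)$, i.e.\ \eqref{eq:sum1}. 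The mechanism that makes this sum converge is not ``tail estimates'' in the generic sense but a quite specific chain: fix $T = g(\tilde\fm)$, invoke the superexponential rarity $\#\{\Nm(\fs) \leq X : g(\fs) \geq T\} \ll X/T!$ (Lemma~\ref{lem:glemma1}), bound a divisor sum by $\log 2T$ (Lemma~\ref{lem:divisorsum}), and --- crucially --- introduce the dyadic slicing $\Xi_\rho(h) = \{\fr : \Nm(\fr)^{2/(s+t)}\psi_\rho(\fr) \in [2^h, 2^{h+1})\}$, which is precisely where the hypothesis \eqref{eq:llcondition} is consumed (it guarantees each $h$-range is bounded above). Without that slicing you cannot convert the constraint $D(\fm,\fn) < T$ into a finite range for $\Nm(\fs)$. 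Moreover, that conversion behaves differently depending on the sign of $\kappa = (s+t-2|\Sigma_1|)/(s+t)$, and the case $\kappa = 0$ is exactly where the ``balanced'' hypothesis is required --- your proposal mentions the balance condition only as ``keeping products in line,'' which does not identify this as the degenerate case that would otherwise break the argument. So the proposal identifies the right toolbox but does not supply the argument that actually closes the sum, and the part it does sketch misstates the split; that is a genuine gap rather than an equivalent proof by a different route.
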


In the next section, we will provide a proof of this theorem, modulo some major results which we defer to later sections.


\section{Proof of the main theorem}
\label{sec:mainproof}

In this section, we provide a proof of Theorem~\ref{thm:maintheorem}, modulo some major results which we defer to later sections.

The first step of the proof is very standard; for each \(\fn \in I_K\), we define a set \(\cA_{\fn}(\PSI)\) by
\[
\cA_{\fn}({\PSI}) = \rset{\x \in D_K}{\begin{array}{c}\hbox{there is some $\gamma \in K$ with $\dnm \gamma = \fn$} \\ \hbox{such that $\gamma$ is a $\PSI$-good approximation to \(\x\)} \end{array}}.\]

(Given we tend to work with a fixed \(\PSI\) at any given time, we will usually suppress \(\PSI\) in the notation and just write \(\cA_{\fn}\).) Then taking the same approach as in \S4 of \cite{Palmer2020}, we find that
\[
\lambda(\cA(\PSI)) \geq \limsup_{R \in \NN} \left( \sum_{\substack{\fn \in I_K \\ \Nm(\fn) \leq R}} \lambda(\cA_{\fn}) \right)^2 \left( \sum_{\substack{\fm,\fn \in I_K \\ \Nm(\fm),\Nm(\fn) \leq R}} \lambda(\cA_{\fm} \cap \cA_{\fn}) \right)^{-1}.
\]

By the zero-one law (see Theorem 2.1 of \cite{Palmer2020}), we only need to show that \(\lambda(\cA(\PSI)) > 0\) to show that we have full measure. So we will show that the right-hand side of the above inequality is greater than \(0\). We will do this by showing that
\begin{equation}
\label{eq:aim1}
\sum_{\substack{\fm,\fn \in I_K \\ \Nm(\fm) < \Nm(\fn) \leq R}} \lambda(\cA_{\fm} \cap \cA_{\fn}) \ll \left( \sum_{\substack{\fn \in I_K \\ \Nm(\fn) \leq R}} \lambda(\cA_{\fn}) \right)^2. \end{equation}

One of the first steps (and one of the key ingredients of the proof) is the following result, a version of Pollington and Vaughan's \(P(m,n)\) bound from \cite{PollingtonVaughan1990}:

\begin{proposition}
\label{prop:Pmnbound}
For \(\fm \neq \fn \in I_K\), we have
\[
\lambda(\cA_{\fm} \cap \cA_{\fn}) \ll P(\fm,\fn) \lambda(\cA_{\fm}) \lambda(\cA_{\fn}),
\]
where
\[
P(\fm,\fn) = \prod_{\substack{\fp | \frac{\fm\fn}{(\fm,\fn)^2} \\ \Nm(\fp) > D(\fm,\fn)}} \left(  1 - \frac{1}{\Nm(\fp)} \right)^{-1}
\]
and
\[
D(\fm,\fn) = \Nm\left(\tfrac{\fm \fn}{(\fm,\fn)}\right) \prod_{\rho \in \Sigma} \max \{ \psi_{\rho}(\fm), \psi_{\rho}(\fn) \}.\]
\end{proposition}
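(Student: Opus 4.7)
The overall plan is to decompose each $\cA_{\fn}$ as a union of archimedean ``boxes'' centred at $\iota(\gamma)$ for the approximants $\gamma \in K$ with $\dnm \gamma = \fn$, and then to bound $\lambda(\cA_{\fm} \cap \cA_{\fn})$ by counting the pairs $(\gamma_1,\gamma_2)$ whose boxes overlap. For each such $\gamma$, let $B_{\gamma} = \{\x \in \RR^s \times \CC^t : |x_{\rho} - \rho(\gamma)| \leq \psi_{\rho}(\fn) \text{ for all } \rho \in \Sigma\}$; then $\cA_{\fn} = \bigcup_{\gamma} (B_{\gamma} \cap D_K)$, each box has measure $2^s \pi^t \Psi(\fn)$, and the decay hypothesis \eqref{eq:llcondition} will guarantee that distinct boxes with the same denominator ideal are essentially disjoint, so $\lambda(\cA_{\fn})$ is comparable to the number of contributing approximants times $\Psi(\fn)$.

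The second step is to understand when $B_{\gamma_1} \cap B_{\gamma_2}$ is non-empty. By the triangle inequality this forces $|\rho(\gamma_1 - \gamma_2)| \ll \max\{\psi_{\rho}(\fm),\psi_{\rho}(\fn)\}$ for every $\rho \in \Sigma$, and in that case the intersection has measure $\ll \prod_{\rho} \min\{\psi_{\rho}(\fm),\psi_{\rho}(\fn)\}$. Setting $\fd = (\fm,\fn)$ and $\fL = \fm\fn/\fd$, the difference $\delta = \gamma_1 - \gamma_2$ lies in $\fL^{-1}$, and the integral ideal $(\delta)\fL$ is coprime to $\fL/\fd = \fm\fn/\fd^2$: this is the ideal-theoretic analogue of the classical identity $an'-bm' \equiv an' \pmod{m'}$, carried out on principal fractional ideals and exploiting that the ``numerators'' of $\gamma_1$ and $\gamma_2$ are coprime to $\fm$ and $\fn$ respectively. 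Fixing $\gamma_1$, counting the eligible $\gamma_2$ therefore reduces to counting elements $\delta \in \fL^{-1}$ in a box around $\iota(\gamma_1)$ of volume $\asymp \prod_{\rho} \max\{\psi_{\rho}(\fm),\psi_{\rho}(\fn)\}$ whose associated integral ideal is coprime to $\fm\fn/\fd^2$.

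The third step is to estimate this coprime lattice-point count by invoking the sieve result of \S\ref{sec:sievetheory}. Noting that the box volume times $\Nm(\fL)$ is precisely $D(\fm,\fn)$, that result will produce a count of the shape $D(\fm,\fn) \prod_{\fp \,\mid\, \fm\fn/\fd^2,\; \Nm(\fp) \leq D(\fm,\fn)} (1 - \Nm(\fp)^{-1})$, with primes of norm exceeding $D(\fm,\fn)$ contributing no cancellation because the sieving region is too small to feel them. Multiplying by the per-pair overlap measure, summing over $\gamma_1$, and dividing by $\lambda(\cA_{\fm})\lambda(\cA_{\fn})$ (in which the Euler-factor expansion $\Phi(\fn)/\Nm(\fn) = \prod_{\fp \mid \fn}(1 - \Nm(\fp)^{-1})$ appears twice) then converts the small-prime sieve factor, after cancellation with the denominator, into exactly $\prod_{\fp \mid \fm\fn/\fd^2,\; \Nm(\fp) > D(\fm,\fn)}(1 - \Nm(\fp)^{-1})^{-1} = P(\fm,\fn)$.

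The principal technical obstacle is verifying the coprimality of $(\delta)\fL$ in the presence of non-trivial ideal class and unit groups: one must keep track of how unit translates can shift approximants across $D_K$, and of which ideal classes arise as $\fa_{\gamma} = (\gamma)\fn$ when $\dnm\gamma = \fn$. Once this is set up and the geometric count of approximants contributing to each $\cA_{\fn}$ is pinned down (with the aid of \eqref{eq:llcondition}), the sieve of \S\ref{sec:sievetheory} delivers the key estimate, and the manipulation producing $P(\fm,\fn)$ follows the template of Pollington and Vaughan almost verbatim.
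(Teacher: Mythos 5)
Your overall plan follows the paper's outline — box decomposition, overlap counting, a sieve from \S\ref{sec:sievetheory} — but there is a genuine gap at the reduction step.  When you fix $\gamma_1$ with $\dnm\gamma_1=\fm$ and pass to $\delta=\gamma_1-\gamma_2$, the eligible $\delta$ are \emph{not} a density-one (up to the coprimality sieve) subset of $\fL^{-1}$, where $\fL=\fm\fn/(\fm,\fn)$.  Since $\gamma_2$ ranges over elements with $\dnm\gamma_2=\fn$, one has $\delta\in\gamma_1+\fn^{-1}$, a single coset of the sub-lattice $\fn^{-1}\subset\fL^{-1}$ of index $\Nm\bigl(\fm/(\fm,\fn)\bigr)$.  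Counting $\delta$ freely in $\fL^{-1}$ with only the coprimality restriction therefore over-counts by exactly that index, which is unbounded (take $\fm=\fp^{100}$, $\fn=\fp$ to see the bound fail by a factor of $\Nm(\fp)^{99}$).  The factor $D(\fm,\fn)=\bm\Delta\,\Nm(\fL)$ you obtain from ``box volume times $\Nm(\fL)$'' is precisely where this loss enters: the correct lattice density near a fixed $\gamma_1$ is governed by $\Nm(\fn)$, not $\Nm(\fL)$.

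There is a second, related issue: the condition ``$(\delta)\fL$ coprime to $\fm\fn/(\fm,\fn)^2$'' is \emph{necessary} but not sufficient for $\dnm(\gamma_1-\delta)=\fn$.  At primes $\fp$ dividing $(\fm,\fn)$ to the \emph{same} power in $\fm$ and $\fn$, your coprimality statement gives no constraint at all, whereas the denominator condition on $\gamma_2$ does.  These equal-multiplicity primes are exactly the delicate ones in Pollington--Vaughan.  The paper deals with them by \emph{not} fixing $\gamma_1$: it sums over the difference $\theta=\beta-\gamma$, decomposes $\fm,\fn$ via the Chinese Remainder Theorem into the three ideals $\fL$ (equal-multiplicity part), $\fM$ (min part) and $\fN$ (max part), and counts representations of each $\theta$, producing a M\"obius-type weight $\sum_{\fd\mid(\fL,\,\fL\fN(\theta))}\mu(\fd)^2/\Phi(\fd)$ that records, per $\theta$, which equal-multiplicity primes actually divide the ``numerator.''  This is what eventually yields the correct coefficient $\Phi(\fL)^2\Phi(\fM)\Phi(\fN)=\Phi(\fm)\Phi(\fn)$.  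Your class-group remark (introducing ideals in the inverse class of $\fL$ and $\fN$ with small norm to reduce to a count of ring elements) is indeed needed and matches the paper's use of $\hat\fL,\hat\fN$; but without the coset constraint and the representation-counting at equal-multiplicity primes, the argument as sketched does not close.
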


Proving this \(P(\fm,\fn)\) bound will be the focus of \S\ref{sec:Pmn} and \S\ref{sec:sievetheory}.

Next we will need to analyse this factor \(P(\fm,\fn)\). In order to do this, we need to introduce an analogue of Erd\H{o}s's \(g\)-function (the original version of which is defined on p430 of \cite{Erdos1970}):

\begin{definition}[\(g\)-function]
\label{def:gfct}
For an ideal \(\fn \in I_K\), we define \(g(\fn)\) to be the smallest natural number such that
\[
\sum_{\substack{\fp \mid \fn \\ \Nm(\fp) > g(\fn)}} \frac{1}{\Nm(\fp)} < \frac{1}{2}. \]
\end{definition}

In \S\ref{sec:NFgfct}, we will use a relationship between \(P(\fm,\fn)\) and \(g(\fn)\) to prove the following result:

\begin{lemma}\label{lem:overlapconditions}
Let 
\[
\tau = \tau(\fm,\fn) = \max \left\{ g \left( \tfrac{\fm}{(\fm,\fn)} \right), g \left( \tfrac{\fn}{(\fm,\fn)} \right) \right\}.\]

Then we have
\[
\lambda(\cA_{\fm} \cap \cA_{\fn}) \ll \left\{ \begin{array}{ll} \lambda(\cA_{\fm}) \lambda(\cA_{\fn}) \log(2\tau) & \hbox{if } \frac{1}{2^{s+t}} < D(\fm,\fn) < \tau \\ \lambda(\cA_{\fm}) \lambda(\cA_{\fn}) & \hbox{if } \tau \leq D(\fm,\fn) \end{array} \right. \]
\end{lemma}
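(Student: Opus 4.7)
The plan is to apply Proposition~\ref{prop:Pmnbound}, which reduces the lemma to showing $P(\fm,\fn) \ll \log(2\tau)$ in the first case and $P(\fm,\fn) \ll 1$ in the second, and then to estimate $P(\fm,\fn)$ by splitting its defining product at the auxiliary threshold $\tau$. Concretely, write $P(\fm,\fn) \leq P_{\mathrm{low}} \cdot P_{\mathrm{high}}$, where $P_{\mathrm{low}}$ is the subproduct over primes $\fp \mid \tfrac{\fm\fn}{(\fm,\fn)^2}$ with $D(\fm,\fn) < \Nm(\fp) \leq \tau$ and $P_{\mathrm{high}}$ is the subproduct over such primes with $\Nm(\fp) > \tau$; note that $P_{\mathrm{low}}$ is an empty product (and hence equals $1$) as soon as $D(\fm,\fn) \geq \tau$.

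For $P_{\mathrm{high}}$, the key observation is that each prime divisor of $\fm\fn/(\fm,\fn)^2$ divides at least one of $\fm/(\fm,\fn)$ or $\fn/(\fm,\fn)$. Together with Definition~\ref{def:gfct} and the fact that $\tau \geq \max\{g(\fm/(\fm,\fn)), g(\fn/(\fm,\fn))\}$, this gives
\[
\sum_{\substack{\fp \mid \frac{\fm\fn}{(\fm,\fn)^2} \\ \Nm(\fp) > \tau}} \frac{1}{\Nm(\fp)} \leq \sum_{\substack{\fp \mid \fm/(\fm,\fn) \\ \Nm(\fp) > g(\fm/(\fm,\fn))}} \frac{1}{\Nm(\fp)} + \sum_{\substack{\fp \mid \fn/(\fm,\fn) \\ \Nm(\fp) > g(\fn/(\fm,\fn))}} \frac{1}{\Nm(\fp)} < 1.
\]
Combining this with the elementary inequality $-\log(1-x) \leq 2x$ for $0 \leq x \leq 1/2$ (valid since $\Nm(\fp) \geq 2$) yields $\log P_{\mathrm{high}} < 2$, and hence $P_{\mathrm{high}} \ll 1$.

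For $P_{\mathrm{low}}$, dropping the divisibility constraint and applying Theorem~\ref{thm:Merten3} gives
\[
P_{\mathrm{low}} \leq \prod_{\Nm(\fp) \leq \tau} \left(1 - \frac{1}{\Nm(\fp)}\right)^{-1} = e^{\gamma}\alpha_K \log\tau + O_K(1) \ll \log(2\tau),
\]
where the factor of $2$ absorbs the additive constant when $\tau$ is small (note $\tau \geq 1$ since $g$ is $\NN$-valued). Multiplying the two pieces gives $P(\fm,\fn) \ll \log(2\tau)$ when $D(\fm,\fn) < \tau$ and $P(\fm,\fn) \ll 1$ when $D(\fm,\fn) \geq \tau$, as required.

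The substantive work in proving this lemma sits in Proposition~\ref{prop:Pmnbound}; here the argument is essentially structural. The only subtle point is the subadditive control of the tail via the $g$-function, which goes through cleanly thanks to the prime-divisor observation above. The hypothesis $1/2^{s+t} < D(\fm,\fn)$ in the first case plays no role in the proof itself; presumably it reflects the regime in which the lemma is subsequently invoked in \S\ref{sec:mainproof}.
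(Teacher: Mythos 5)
Your argument is correct and matches the route the paper intends: reduce via Proposition~\ref{prop:Pmnbound} to bounding $P(\fm,\fn)$, split that product at $\tau$, control the tail over $\Nm(\fp) > \tau$ using the defining property of $g$ applied separately to $\fm/(\fm,\fn)$ and $\fn/(\fm,\fn)$ (this is exactly the content of the first part of Lemma~\ref{lem:basicpropertiesofg}, which you essentially reprove inline with the $-\log(1-x)\leq 2x$ bound), and bound the head $D(\fm,\fn) < \Nm(\fp) \leq \tau$ by Theorem~\ref{thm:Merten3}. The paper leaves this assembly implicit --- Lemma~\ref{lem:basicpropertiesofg} is the advertised ``relationship between $P(\fm,\fn)$ and $g(\fn)$'' --- and your proposal supplies the missing glue correctly, including the observation that the lower bound $1/2^{s+t} < D(\fm,\fn)$ is not used in the estimate itself.
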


(Note that if \(D(\fm,\fn) \leq \frac{1}{2^{s+t}}\), then the intersection is zero measure.)

Applying Lemma~\ref{lem:overlapconditions}, in view of our goal in \eqref{eq:aim1}, it will be enough to show that
\begin{equation}
\label{eq:sum1}
\sum_{\substack{\Nm(\fm), \Nm(\fn) \leq R \\ \frac{1}{2^{s+t}} < D(\fm,\fn) < \tau(\fm,\fn)}} \Psi(\fm) \Psi(\fn) \Phi(\fm) \Phi(\fn) \log 2\tau \ll \left( \sum_{\Nm(\fn) \leq R} \Phi(\fn) \Psi(\fn) \right)^2.\end{equation}

Two other properties of the \(g\)-function we will need are contained in the following lemmas:

\begin{lemma}
\label{lem:glemma1}
Let \(X \geq 1\) and \(v \in \NN\). Then
\[
\sum_{\substack{\Nm(\fn) \leq X \\ g(\fn) \geq v}} 1 \ll \frac{X}{v!}. \]
\end{lemma}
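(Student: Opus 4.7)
The plan is to apply the moment method.

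First, Definition~\ref{def:gfct} implies that $g(\fn) \geq v$ forces
\[
S_v(\fn) := \sum_{\fp \mid \fn, \, \Nm(\fp) \geq v} \frac{1}{\Nm(\fp)} \geq \frac{1}{2},
\]
since otherwise the defining inequality for $g$ would already be satisfied at $w = v - 1$. Markov's inequality applied to the $v$-th power of $2S_v$ then yields
\[
\sum_{\substack{\Nm(\fn) \leq X \\ g(\fn) \geq v}} 1 \leq 2^v \sum_{\Nm(\fn) \leq X} S_v(\fn)^v.
\]

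Next, I would expand $S_v(\fn)^v$ as a sum over ordered $v$-tuples $(\fp_1, \ldots, \fp_v)$ of prime ideals of norm at least $v$, interchange the order of summation, and use the standard estimate $\#\{\Nm(\fn) \leq X : \fp_i \mid \fn \text{ for all } i\} \leq X/\Nm(\lcm(\fp_i))$. Splitting the tuples by their multiplicity profile and accounting for the corresponding multinomial coefficients, the dominant contribution comes from $v$-tuples of distinct primes, which is bounded by
\[
X \left( \sum_{\Nm(\fp) \geq v} \frac{1}{\Nm(\fp)^2} \right)^v.
\]
From the prime ideal theorem for $K$ and partial summation one obtains $\sum_{\Nm(\fp) \geq v} 1/\Nm(\fp)^2 \ll_K 1/(v \log v)$; combined with Stirling's formula $v! \asymp \sqrt{v}(v/e)^v$, this yields the desired $\ll X/v!$ bound for $v$ larger than some $K$-dependent threshold. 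The finitely many small values of $v$ are then handled by the trivial bound $\#\{\Nm(\fn) \leq X\} \ll_K X$.

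The main technical obstacle is the bookkeeping for tuples with repeated primes in the moment expansion: although each repeat produces a smaller reciprocal-norm factor per distinct prime appearing, the multinomial coefficients grow rapidly with the multiplicities, and one must argue carefully (via Stirling-number-style bounds on the multinomials and the decay $\sum_{\Nm(\fp) \geq v} 1/\Nm(\fp)^a \ll 1/(v^{a-1}\log v)$ for $a \geq 2$) that these profiles do not dominate the distinct-prime case. The sharpening $\ll 1/(v\log v)$ over the trivial $\ll 1/v$ in the Mertens-type prime-ideal sum is essential to absorb the loss from Stirling's formula and obtain the factorial decay $X/v!$ rather than only $X(C/v)^v$.
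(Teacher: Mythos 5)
Your proposal takes a genuinely different route from the paper's. The paper follows Erd\H{o}s (via Lemma 2.9 of Harman's book): it introduces a threshold $E = e^{v^2}$ and splits the ideals $\fn$ into those with at least $2v$ distinct prime factors of norm in $[v,E)$ (handled by a divisor count, giving $\ll X(\log 2v)^{2v}/(2v)! \ll X/v!$) and those with fewer, where the $g$-condition forces $\sum_{\fp\mid\fn,\,\Nm(\fp)>E} 1/\Nm(\fp) > 1/4$ and the count is then $\ll X/E \ll X/v!$. Your moment method avoids this case split entirely at the price of the multinomial bookkeeping you flag. Both work, but they are structurally quite different: the paper's argument is a targeted combinatorial dichotomy, while yours is a soft Chernoff-type bound.

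One substantive inaccuracy in your sketch: it is \emph{not} true that the $v$-tuples of distinct primes give the dominant contribution to $\sum_{\Nm(\fn)\le X} S_v(\fn)^v$. The opposite extreme --- all $v$ coordinates equal to the same prime $\fq$ --- contributes $X\sum_{\Nm(\fq)\ge v} 1/\Nm(\fq)^{v+1} \asymp X\,v^{-v}/\log v$, which exceeds the all-distinct bound $X\bigl(\sum_{\Nm(\fp)\ge v}1/\Nm(\fp)^2\bigr)^v \ll X(v\log v)^{-v}$ by a factor of order $(\log v)^{v-1}$. Fortunately this does not kill the argument: after multiplying by the Markov factor $2^v$, the $k$-distinct-primes contribution is $\ll 2^v X\,v^{-v}/\log v = (2/e)^v\,O(\sqrt{v}/\log v)\cdot X/v!$, which still tends to zero relative to $X/v!$ by Stirling. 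What you actually need, and what does hold once the Stirling-number bookkeeping is done correctly, is that \emph{every} multiplicity profile contributes $\ll (e/2v)^v X/\sqrt{v}$, not that the distinct profile dominates. So the method closes, but the target estimate should be rephrased: the profiles with repeats must be shown to be $\ll X/(2^v v!)$ in their own right, not $\ll$ the distinct-prime term.

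Two smaller points: the deduction ``$g(\fn)\ge v \Rightarrow S_v(\fn)\ge \tfrac12$'' is correct (if $g(\fn)\ge v$ then minimality fails at $g(\fn)-1 \ge v-1$, and since norms are integers the half-open condition $\Nm(\fp)>v-1$ is the same as $\Nm(\fp)\ge v$), but it is only meaningful for $v\ge 2$, consistent with your remark about handling small $v$ trivially. Also, the sharpening $\sum_{\Nm(\fp)\ge v}1/\Nm(\fp)^2 \ll 1/(v\log v)$ over the trivial $\ll 1/v$ is indeed needed to absorb the $(2/e)^v$ and Stirling losses, as you observe.
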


\begin{lemma}
\label{lem:divisorsum}
For \(v \in \NN\) and \(\fq \in I_K\), we have
\[
\sum_{\substack{\fd | \fq \\ g(\frac{\fq}{\fd}) \leq v}} \frac{1}{\Nm(\fd)} \ll \log 2v.\]
\end{lemma}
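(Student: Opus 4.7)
The plan is to split $\fq$ according to the size of its prime factors relative to $v$: write $\fq = \fq_{\leq v}\,\fq_{>v}$, where $\fq_{\leq v}$ collects the prime-power factors of $\fq$ whose underlying prime has norm $\leq v$, and $\fq_{>v}$ those whose underlying prime has norm $>v$. Every divisor $\fd \mid \fq$ factors uniquely as $\fd = \fd_1 \fd_2$ with $\fd_1 \mid \fq_{\leq v}$ and $\fd_2 \mid \fq_{>v}$. Since every prime of $\fq_{\leq v}/\fd_1$ has norm $\leq v$, Definition~\ref{def:gfct} shows that the condition $g(\fq/\fd) \leq v$ is equivalent to $\sum_{\fp \mid \fq_{>v}/\fd_2} 1/\Nm(\fp) < 1/2$, and in particular depends only on $\fd_2$. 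The sum therefore factors as
\[
\sum_{\substack{\fd \mid \fq \\ g(\fq/\fd) \leq v}} \frac{1}{\Nm(\fd)} = \biggl(\sum_{\fd_1 \mid \fq_{\leq v}} \frac{1}{\Nm(\fd_1)}\biggr) \biggl(\sum_{\substack{\fd_2 \mid \fq_{>v} \\ \sum_{\fp \mid \fq_{>v}/\fd_2} 1/\Nm(\fp) < 1/2}} \frac{1}{\Nm(\fd_2)}\biggr).
\]

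For the first factor, I would bound it above by the Euler product $\prod_{\Nm(\fp) \leq v}(1-1/\Nm(\fp))^{-1}$, which Theorem~\ref{thm:Merten3} evaluates as $\ll \log(2v)$; this is where the logarithmic factor in the conclusion comes from.

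The main work is to show that the second factor is $O(1)$. Reparametrising by $\fe = \fq_{>v}/\fd_2$ and grouping terms according to the set $T = \{\fp : \fp \mid \fe\}$, geometric-series estimates on each prime reduce the second factor to an upper bound of the form
\[
\sum_{T} \Bigl[\sum_{\fp \in T} y_\fp < \tfrac{1}{2}\Bigr] \prod_{\fp \in T}(1-y_\fp)^{-1} \prod_{\fp \notin T} y_\fp, \qquad y_\fp := \tfrac{1}{\Nm(\fp)},
\]
where the sum is over subsets $T$ of the set of primes dividing $\fq_{>v}$. Since $\sum_{\fp \mid \fq_{>v}} y_\fp$ can be arbitrarily large, the indicator must be exploited essentially; this is the crux of the argument. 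The plan is a two-step soft-max manoeuvre. First, under the constraint $\sum_T y_\fp < 1/2$ one has $\prod_{\fp \in T}(1-y_\fp)^{-1} \leq e$ (because $-\log(1-y) \leq 2y$ on $[0,1/2]$), absorbing that factor into the constant. Next, the indicator itself admits the pointwise bound $[\sum_T y_\fp < 1/2] \leq e \cdot \prod_{\fp \in T} e^{-2 y_\fp}$, so the remaining sum $\sum_T [\cdot] \prod_{\fp \notin T} y_\fp$ collapses into the Euler-style product $e\prod_\fp(e^{-2 y_\fp} + y_\fp)$. The elementary inequality $e^{-2y} + y \leq 1$ on $[0, 1/2]$ (which applies to every $y_\fp$, since $\Nm(\fp) \geq 2$) then bounds this product by $1$, giving a bound of $O(1)$ for the second factor. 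Multiplying the two factor bounds yields $\ll \log 2v$, as required.
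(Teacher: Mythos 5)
Your proof is correct, and it is genuinely different from the paper's. The paper's proof is a short weight-insertion argument: by the second part of Lemma~\ref{lem:basicpropertiesofg}, the condition \(g(\fq/\fd) \le v\) forces \(1 \ll \frac{\Phi(\fq/\fd)}{\Nm(\fq/\fd)}\log 2v\); multiplying the summand by this factor and then dropping the \(g\)-constraint reduces everything to the exact identity \(\sum_{\fd\mid\fq}\Phi(\fd) = \Nm(\fq)\), giving the bound in two lines. Your approach instead works directly from Definition~\ref{def:gfct}: you correctly observe that the condition \(g(\fq/\fd)\le v\) is equivalent to \(\sum_{\fp\mid\fq_{>v}/\fd_2} 1/\Nm(\fp) < 1/2\) and so depends only on the ``large-prime'' part \(\fd_2\); you extract the \(\log 2v\) from the small-prime factor via Mertens (Theorem~\ref{thm:Merten3}), and then bound the large-prime factor by \(O(1)\) with the exponential-weight trick \([\sum_T y_\fp<1/2]\le e\prod_T e^{-2y_\fp}\), the estimate \(\prod_T(1-y_\fp)^{-1}\le e\), and the pointwise inequality \(e^{-2y}+y\le 1\) on \([0,1/2]\) (all of which I checked and which hold). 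The trade-offs: the paper's argument is shorter and cleaner but leans on the auxiliary Lemma~\ref{lem:basicpropertiesofg} (which in turn uses Mertens); your argument is longer and more combinatorial but is self-contained modulo Mertens, and it makes transparent \emph{where} the \(\log 2v\) actually comes from, namely the small primes \(\Nm(\fp)\le v\), while the \(g\)-condition is exactly what tames the large-prime divisors. Both are valid; the paper's is the more economical route given that Lemma~\ref{lem:basicpropertiesofg} is needed elsewhere anyway.
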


These lemmas will also be proved in \S\ref{sec:NFgfct}. Assuming these two results, we can now prove Theorem~\ref{thm:maintheorem}.

For brevity, write
\[
\tilde{\fm} = \frac{\fm}{(\fm,\fn)}, \quad \tilde{\fn} = \frac{\fn}{(\fm,\fn)}.\]

We start by noting that the left-hand side of \eqref{eq:sum1} is symmetric in \(\fm\) and \(\fn\). Then since we either have \(\tau = g(\tilde{\fm})\) or \(\tau = g(\tilde{\fn})\), we choose to only consider the former case: the latter can be dealt with by symmetry. 

We can split over possible values of \(\tau\), denoting these by \(T\):
\[
\sum_{T \in \NN} \log 2T \sum_{\substack{\Nm(\fm), \Nm(\fn) \leq R \\ \frac{1}{2^{s+t}} < D(\fm,\fn) < T \\ g(\tilde{\fm}) = T \\ g(\tilde{\fn}) \leq T}} \Psi(\fm) \Psi(\fn) \Phi(\fm) \Phi(\fn).\]

Now, for each pair \((\fm,\fn)\), let
\[
\Sigma_1(\fm,\fn) = \{ \rho \in \Sigma \ | \ \psi_{\rho}(\fm) \geq \psi_{\rho}(\fn) \}, \quad \Sigma_2(\fm,\fn) = \Sigma \backslash \Sigma_1(\fm,\fn).\]

Given there are only finitely many choices for \(\Sigma_1(\fm,\fn)\), it will be enough to prove that for each fixed subset \(\Sigma_1 \subset \Sigma\), we have that \eqref{eq:sum1} holds for the subsum where the \((\fm,\fn)\) satisfy \(\Sigma_1(\fm,\fn) = \Sigma_1\) (and hence \(\Sigma_2(\fm,\fn) = \Sigma \backslash \Sigma_1,\) which we naturally call \(\Sigma_2\)). In this case, we have that
\[
D(\fm,\fn) = \Nm \left( \frac{\fm \fn}{(\fm,\fn)} \right) \prod_{\rho \in \Sigma_1} \psi_{\rho}(\fm) \prod_{\rho \in \Sigma_2} \psi_{\rho}(\fn),
\]
and if we write
\[
\Psi_1(\fr) = \prod_{\rho \in \Sigma_1} \psi_{\rho}(\fr), \quad \Psi_2(\fr) = \prod_{\rho \in \Sigma_2} \psi_{\rho}(\fr),
\]
then we have
\[
D(\fm,\fn) = \Nm(\tilde{\fm}) \Nm(\fn) \Psi_1(\fm) \Psi_2(\fn).\]

Noting that \(\Phi(\fm) \leq \Nm(\fm)\), we find that our sum is bounded by
\[
\sum_{T \in \NN} \log 2T \sum_{\Nm(\fn) \leq R} \Phi(\fn) \Psi(\fn)  \sum_{\substack{\Nm(\fm) \leq R \\ \frac{1}{2^{s+t}} < \Nm(\tilde{\fm}) \Nm(\fn) \Psi_1(\fm) \Psi_2(\fn) < T \\ g(\tilde{\fm}) = T \\ g(\tilde{\fn}) \leq T}} \Psi(\fm) \Nm(\fm).\]

Consider just the innermost sum for fixed \(T\) and \(\fn\). We find this sum is equal to 
\[
\sum_{\substack{\fg \mid \fn \\ g(\fn/\fg) \leq T}} \sum_{\substack{\Nm(\fs) \leq \frac{R}{\Nm(\fg)} \\ \frac{1}{2^{s+t}} < \Nm(\fs) \Nm(\fn) \Psi_1(\fg \fs) \Psi_2(\fn) < T \\ g(\fs) = T}} \Psi(\fg \fs) \Nm(\fg \fs).\]

Next, for \(\rho \in \Sigma\) and \( h \in \ZZ\), we define
\[
\Xi_{\rho}(h) = \left\{ \fr \in I_K \ \left| \ \Nm(\fr)^{\frac{2}{s+t}} \psi_{\rho}(\fr) \in \left[ \left. 2^h, 2^{h+1} \right) \right. \right. \right\}.\]

By our assumption in \eqref{eq:llcondition}, we know that for each \(\rho\) there exists \(H_{\rho} \in \ZZ\) such that \(\Xi(h) = \emptyset\) for \(h \geq H_{\rho}\).

Let \(|\Sigma_1| = \varsigma\), and let  \(\rho_1,\rho_2,\ldots,\rho_{\varsigma}\) be the elements of \(\Sigma_1\). Then we once again tackle the innermost sum, and bound it above by
\begin{equation}
\label{eq:mostdecomposedsum}
\frac{1}{\Nm(\fg)} \sum_{h_1 \leq H_1} \cdots \sum_{h_{\varsigma} \leq H_{\varsigma}} 2^{h_1 + \cdots + h_{\varsigma} + \varsigma} \sum_{\substack{\frac{1}{2^{s+t} \Nm(\fn) \Psi_2(\fn)} < \Nm(\fs) \Psi_1(\fg \fs) < \frac{T}{\Nm(\fn) \Psi_2(\fn)} \\ g(\fs) = T \\ \fg \fs \in \Xi_1(h_1) \\ \cdots \\ \fg \fs \in \Xi_{\varsigma}(h_{\varsigma})}} \frac{1}{\Nm(\fs)}.
\end{equation}

We now aim to show that the multiple sum following \(\frac{1}{\Nm(\fg)}\) is \(\ll \frac{\log 2T}{T!}\).

Write \(\kappa = \frac{s+t-2\varsigma}{s+t}\), and assume for now that \(\kappa \neq 0\). Note that if 
\[
N(\fs) \Psi_1(\fg \fs) < \frac{T}{\Nm(\fn) \Psi_2(\fn)},
\]
then
\[
\Nm(\fs) < \frac{T}{\Nm(\fn) \Psi_2(\fn)} \prod_{1 \leq i \leq \varsigma} \frac{1}{\psi_{\rho_i}(\fg \fs)}.\]

Then since \(\fg \fs \in \Xi_i(h_i)\) for \(1 \leq i \leq \varsigma\), we have
\[
\frac{1}{\psi_{\rho_i}(\fg \fs)} \leq \frac{\Nm(\fg)^{\frac{2}{s+t}} \Nm(\fs)^{\frac{2}{s+t}}}{2^{h_i}},
\]
and hence
\[
\Nm (\fs)^{\kappa} < \frac{T \Nm(\fg)^{\frac{2\varsigma}{s+t}}}{\Nm(\fn) \Psi_2(\fn) 2^{h_1 + \cdots + h_{\varsigma}}}.\]

By the same line of argument but from below, we find that
\[
\Nm (\fs)^{\kappa} > \frac{\Nm(\fg)^{\frac{2\varsigma}{s+t}}}{\Nm(\fn) \Psi_2(\fn) 2^{h_1 + \cdots + h_{\varsigma} + s + t}}.\]

Writing
\[
X = \frac{\Nm(\fg)^{\frac{2\varsigma}{s+t}}}{\Nm(\fn) \Psi_2(\fn) 2^{h_1 + \cdots + h_{\varsigma} + s + t}}, \quad Y = \frac{T \Nm(\fg)^{\frac{2\varsigma}{s+t}}}{\Nm(\fn) \Psi_2(\fn) 2^{h_1 + \cdots + h_{\varsigma}}},
\]
we see that if \(\kappa > 0\), we have
\[
X^{\frac{1}{\kappa}} < \Nm(\fs) < Y^{\frac{1}{\kappa}},
\]
and if \(\kappa < 0\), we have
\[
Y^{\frac{1}{\kappa}} < \Nm(\fs) < X^{\frac{1}{\kappa}}.\]

So we can bound our sum by
\[
\sum_{h_1 \leq H_1}  \cdots \sum_{h_{\varsigma} \leq H_{\varsigma}} 2^{h_1 + \cdots + h_{\varsigma} + \varsigma} \sum_{\substack{X^{\frac{1}{\kappa}} < \Nm(\fs) < Y^{\frac{1}{\kappa}} \\ g(\fs) = T}} \frac{1}{\Nm(\fs)}
\]
if \(\kappa > 0\), and by
\[
\sum_{h_1 \leq H_1}  \cdots \sum_{h_{\varsigma} \leq H_{\varsigma}} 2^{h_1 + \cdots + h_{\varsigma} + \varsigma} \sum_{\substack{Y^{\frac{1}{\kappa}} < \Nm(\fs) < X^{\frac{1}{\kappa}} \\ g(\fs) = T}} \frac{1}{\Nm(\fs)}
\]
if \(\kappa < 0\). Once again, we tackle the innermost sum.

\begin{lemma}
For any \(0 < X  < Y\), we have
\[
\sum_{\substack{g(\fs) = T \\ X < \Nm(\fs) < Y}} \frac{1}{\Nm(\fs)} \ll \frac{\log \left( \frac{Y}{X} \right)}{T!}.\]\end{lemma}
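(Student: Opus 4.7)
My plan is to perform a dyadic decomposition of the range $(X,Y)$ and apply Lemma~\ref{lem:glemma1} on each dyadic piece. This is the standard trick for converting a cardinality bound (which is what the lemma gives) into a bound on a reciprocal sum, with the $\log(Y/X)$ factor arising naturally from the number of dyadic pieces needed to cover $(X,Y)$.

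Concretely, let $K = \lceil \log_2(Y/X) \rceil$ and cover $(X,Y) \subseteq \bigcup_{k=0}^{K-1} [2^k X, 2^{k+1} X)$. On the $k$th dyadic piece, the summand $1/\Nm(\fs)$ is bounded above by $1/(2^k X)$, so the contribution from this piece is at most
\[
\frac{1}{2^k X} \cdot \#\bigl\{ \fs \in I_K : g(\fs) = T,\ \Nm(\fs) < 2^{k+1} X \bigr\}.
\]
Relaxing the condition $g(\fs) = T$ to $g(\fs) \geq T$ and applying Lemma~\ref{lem:glemma1} with the parameters $v = T$ and $X$ replaced by $2^{k+1} X$, this count is $\ll 2^{k+1} X / T!$, so the contribution from each dyadic piece is $\ll 1/T!$.

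Summing over the $K \ll \log(Y/X)$ dyadic pieces then yields the claimed bound $\ll \log(Y/X)/T!$. I do not anticipate any serious obstacle: the only step with any content is Lemma~\ref{lem:glemma1}, which is cited, and the rest is standard dyadic bookkeeping. (The only mild subtlety is the regime $Y/X$ near $1$, where the dyadic decomposition reduces to a single piece giving $O(1/T!)$; the bound is to be read in the usual sense that allows an implicit additive $O(1/T!)$, as occurs throughout the preceding estimates.)
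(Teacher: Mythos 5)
Your proof is correct, and it takes a genuinely different (though closely related) route from the paper. The paper proceeds by Abel summation: writing $\chi$ for the indicator of $g(\fs)\geq T$ and $A(z)=\sum_{\Nm(\fr)\leq z,\, g(\fr)\geq T}1$, it expresses $\sum_{X<\Nm(\fs)\leq Y}\chi(\fs)/\Nm(\fs)$ as $A(Y)/Y - A(X)/X + \int_X^Y A(t)/t^2\,\diff{t}$, then applies Lemma~\ref{lem:glemma1} to get $A(z)\ll z/T!$, so the boundary terms contribute $\ll 1/T!$ and the integral contributes $\ll \log(Y/X)/T!$. Your dyadic decomposition replaces the continuous partial-summation identity with a discrete one: each of the $\asymp \log(Y/X)+1$ dyadic blocks contributes $\ll 1/T!$ by the same cardinality bound, and you sum. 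Both approaches invoke exactly Lemma~\ref{lem:glemma1} and in effect do the same bookkeeping; yours is perhaps slightly more elementary (no integral, no partial summation identity to verify), while the paper's has the virtue of aligning with the integral manipulations used elsewhere in \S\ref{sec:Pmn}. Your parenthetical caveat about the regime $Y/X$ near $1$ (where the bound degenerates to an additive $O(1/T!)$) is accurate, and the paper's proof exhibits the same implicit additive term coming from the boundary contributions $A(Y)/Y - A(X)/X$; in the application $Y/X \geq 2^{s+t}T$, so this is harmless in both treatments.
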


\begin{proof}
Since all terms are positive, we get an upper bound if we replace the \(g(\fs) = T\) condition by the weaker condition \(g(\fs) \geq T\). We then write this sum as

\[
\sum_{X < \Nm(\fs) \leq Y} \frac{\chi(\fs)}{\Nm(\fs)}, \]
where \(\chi\) is the indicator function for the condition \(g(\fs) \geq T\). Now group our \(\fs\) by norm, giving
\[
\sum_{X < n \leq Y} \frac{1}{n} \sum_{\Nm(\fs) = n} \chi(\fs). \]

Now letting
\[
A(z) = \sum_{\substack{\Nm(\fr) \leq z \\ g(\fr) \geq T}} 1,
\]
we have by Abel summation that 
\[
\sum_{X < \Nm(\fs) \leq Y} \frac{\chi(\fs)}{\Nm(\fs)} = \frac{A(Y)}{Y} - \frac{A(X)}{X} + \int_X^Y \frac{A(t)}{t^2} \ \diff{t}. \]

Note that by Lemma~\ref{lem:glemma1}, we have that
\[
A(z) \ll \frac{z}{T!}.\]

So we have
\[
\frac{A(Y)}{Y} - \frac{A(X)}{X} \ll \frac{1}{T!},
\]
and
\[
\int_X^Y \frac{A(t)}{t^2} \ \diff{t} \ll \frac{1}{T!} \int_X^Y \frac{1}{t} \ \diff{t} =  \frac{\log \left( \frac{Y}{X} \right)}{T!}.\]

Hence 
\[
\sum_{\substack{g(\fs) = T \\ X < \Nm(\fs) < Y}} \frac{1}{\Nm(\fs)} \ll \frac{\log \left( \frac{Y}{X} \right)}{T!}
\]
as required. \end{proof}

Applying this lemma, we see that our sum is
\[
\ll \frac{1}{T!} \log \left( \left( \frac{Y}{X} \right)^{\frac{1}{\kappa}} \right) \quad \hbox{or} \quad \ll \frac{1}{T!} \log \left( \left( \frac{X}{Y} \right)^{\frac{1}{\kappa}} \right) = \frac{1}{T!} \log \left( \left( \frac{Y}{X} \right)^{-\frac{1}{\kappa}} \right)
\]
depending on the sign of \(\kappa\). However, in the first case we can pull out a factor of \(\frac{1}{\kappa}\), which is positive and at most \(s+t\); and in the second case we can pull out a factor of \(-\frac{1}{\kappa}\), which is also positive and also at most \(s+t\). So both factors can be disregarded as \(O(1)\), giving that in both cases we have that our sum is
\[
\ll \frac{1}{T!} \log \left( \frac{Y}{X} \right) \ll \frac{\log 2T}{T!},
\]
and hence that for \(\kappa \neq 0\), the sum given in \eqref{eq:mostdecomposedsum} is
\[
 \ll \frac{\log 2T}{T!} \prod_{i=1}^{\varsigma} \left( \sum_{h_i \leq H_i}  2^{h_i} \right) \ll \frac{\log 2T}{T!}
 \]
 as claimed.

 Now suppose that \(\kappa = 0\): that is to say, that \(2 \varsigma = s+t\). Then by our assumption that \(\PSI\) is balanced, we have that
 \[
 \frac{\Psi_1(\fr)}{\Psi_2(\fr)} = \frac{\prod_{\rho \in \Sigma_1} \psi_{\rho}(\fr)}{\prod_{\rho \in \Sigma_2} \psi_{\rho}(\fr)} \asymp 1,
 \]
 and hence there exists a constant \(C > 1\) such that
 \[
 \frac{\Psi_1(\fr)}{\Psi_2(\fr)} \in (C^{-1},C).\]

Recall that our sum is
\[
\sum_{\substack{\frac{1}{2^{s+t} \Nm(\fn) \Psi_2(\fn)} < \Nm(\fs) \Psi_1(\fg \fs) < \frac{T}{\Nm(\fn) \Psi_2(\fn)} \\ g(\fs) = T }} \frac{1}{\Nm(\fs)}.\]

We have
\[
\Nm(\fs) < \frac{T}{\Nm(\fn) \Psi_2(\fn)} \frac{1}{\Psi_1(\fg \fs)} < \frac{T}{\Nm(\fn) \Psi_1(\fn) \Psi_2(\fn)} 
\]
since \(\Psi_1(\fg \fs) \geq \Psi_1(\fn)\), and since
\[
\Psi_1(\fg \fs) < C \Psi_2(\fg \fs) \leq C \Psi_2(\fn) < C^2 \Psi_1(\fn),
\]
we have that
\[
N(\fs) > \frac{1}{C^2 2^{s+t} \Nm(\fn) \Psi_1(\fn) \Psi_2(\fn)}.\]

So our sum can be bounded by
\[
\sum_{\substack{X < \Nm(\fs) \Psi_1(\fg \fs) < Y \\ g(\fs) = T }} \frac{1}{\Nm(\fs)}
\]
where
\[
X = \frac{1}{C^2 2^{s+t} \Nm(\fn) \Psi(\fn)}, \quad Y = \frac{T}{\Nm(\fn) \Psi(\fn)},
\]
and hence by the same approach as before, we once again find that our sum is \(\ll \frac{\log 2T}{T!}\).

Now the entire sum we're interested in is
\[
\ll \sum_{T \in \NN} \frac{(\log 2T)^2}{T!} \sum_{\Nm(\fn) \leq R} \Phi(\fn) \Psi(\fn)  \sum_{\substack{\fg \mid \fn \\ g(\fn/\fg) \leq T}} \frac{1}{\Nm(\fg)}.\]

Finally, we apply Lemma~\ref{lem:divisorsum} to give us a bound of
\[
\ll \sum_{T \in \NN} \frac{(\log 2T)^3}{T!} \sum_{\Nm(\fn) \leq R} \Phi(\fn) \Psi(\fn).\]

Then since 
\[
\sum_{T \in \NN} \frac{(\log 2T)^3}{T!}
\]
converges, our sum is
\[
\ll \sum_{\Nm(\fn) \leq R} \Phi(\fn) \Psi(\fn) \ll \left( \sum_{\Nm(\fn) \leq R} \Phi(\fn) \Psi(\fn) \right)^2
\]
as required, concluding the proof of Theorem~\ref{thm:maintheorem}.


\section{A \(P(\fm,\fn)\) bound in number fields}
\label{sec:Pmn}

In this section, we prove our version of Pollington and Vaughan's \(P(m,n)\) bound.

\begin{proof}[Proof of Proposition~\ref{prop:Pmnbound}.]
We mainly follow the argument on pp 194--196 of \cite{PollingtonVaughan1990}, with some influence from the proof of Lemma 2.8 in \cite{HarmanBook}.

Following the initial steps of the proof of Lemma 3.1 in \cite{Palmer2020}, we define boxes
\[
\Bx(\gamma,\PSI(\fn)) := \bigoplus_{\rho \in \Sigma} B_{\rho}(\rho(\gamma), \psi_{\rho}(\fn)),
\]
and note that
\[
\cA_{\fn} = \left( \bigcup_{\substack{\gamma \in K \\ \dnm(\gamma) = \fn}} \Bx(\gamma, \PSI(\fn)) \right) \cap D_K. \]

Now we can bound \(\lambda(\cA_{\fm} \cap \cA_{\fn})\) by counting the number of pairs of boxes which overlap, and then bounding the measure of the overlap between any two boxes.

For \(\Bx(\beta,\PSI(\fm))\) and \(\Bx(\gamma,\PSI(\fn))\) to overlap, it is necessary that they satisfy
\[
|\rho(\beta) - \rho(\gamma)| \leq 2 \max \{ \psi_{\rho}(\fm), \psi_{\rho}(\fn) \}
\]
for each \(\rho \in \Sigma\), and hence if we write
\[
\Delta_{\rho} := 2 \max \{ \psi_{\rho}(\fm) , \psi_{\rho}(\fn) \},
\]
then we can get an upper bound for the number of boxes which overlap by determining how many \(\beta,\gamma \in D_K\) with \(\dnm \beta = \fm\), \(\dnm \gamma = \fn\) satisfy
\[
|\rho(\beta - \gamma)| < \Delta_{\rho} \hbox{ for all } \rho \in \Sigma.\]

That is to say, we want to evaluate the sum
\begin{equation}
\label{eq:Pmnsum1}
\sum_{\substack{\beta, \gamma \in D_K \\ \dnm \beta = \fm \\ \dnm \gamma = \fn \\ |\rho(\beta - \gamma)| < \Delta_{\rho} \\ \hbox{\scriptsize for all } \rho \in \Sigma}} 1 \end{equation}

By grouping terms of this sum based on the various values that \(\beta - \gamma\) could take, we can reword it as
\[
\sum_{\substack{\theta \in D_K  \\ |\rho(\theta)| < \Delta_{\rho} \\ \hbox{\scriptsize for all } \rho \in \Sigma}} \sum_{\substack{\beta, \gamma \in D_K \\ \dnm \beta = \fm \\ \dnm \gamma = \fn \\ \beta - \gamma = \theta}} 1.
\]

Now for each \(\theta\), we want to know how many ways it can be represented as suitable \(\beta - \gamma\). This relies on the prime factorisation of \(\fm\) and \(\fn\): write
\[
\fm = \prod \fp^{u_{\fp}}, \quad \fn = \prod \fp^{v_{\fp}}. \]

Write \(\fp^n \mid\mid \fa\) if \(\fp^n \mid \fa\) and \(\fp^{n+1} \nmid \fa\). By the Chinese remainder theorem, for each \(\fp^{u_{\fp}} \mid\mid \fm\), there exists some \(\beta_{\fp} \in K\) such that \(\dnm \beta_{\fp} = \fp^{u_{\fp}}\) and 
\[
\sum_{\fp} \beta_{\fp} \equiv \beta \mod \cO_K. \]

Similarly, we can write \(\gamma\) mod \(\cO_K\) as
\[
\sum_{\fp} \gamma_{\fp},
\]
where \(\dnm \gamma_{\fp} = \fp^{v_{\fp}}\). Hence we have
\[
\beta - \gamma \equiv \sum_{\fp} \beta_{\fp} - \gamma_{\fp} \mod \cO_K .\]

Similarly, we can write any \(\theta\) we're aiming to express as
\[
\theta \equiv \sum_{\fp} \theta_{\fp} \mod \cO_K.\]

We can now work one prime ideal at a time, work out how many possibilities there are with regard to this prime ideal, and then combine the possibilities multiplicatively to get an upper bound for the number of \(\theta\) which can be expressed in this way.

For each prime ideal \(\fp\) dividing \(\fm \fn\) there are two possibilities: \(u_{\fp} \neq v_{\fp}\) and \(u_{\fp} = v_{\fp}\). We will work one prime ideal at a time: so in the following, we will suppress the subscripts, and just write \(u\) and \(v\).

If \(u < v\), then we have \(\dnm(\beta_{\fp} - \gamma_{\fp}) = \fp^v\), and for a given \(\theta_{\fp}\) with \(\dnm \theta_{\fp} = \fp^v\), there are \(\Phi(\fp^u)\) choices \((\beta_{\fp}, \gamma_{\fp})\) which represent it. (We can choose whichever \(\beta_{\fp}\) we like, but for that choice of \(\beta_{\fp}\) there is one and only one choice for \(\gamma_{\fp}\).) And by the same argument, if \(u > v\), we have \(\Phi(\fp^v)\) choices.

What if \(u = v\)? In this case, we cannot say that \(\dnm(\beta_{\fp} - \gamma_{\fp}) = \fp^u\), but we can say that \(\dnm(\beta_{\fp} - \gamma_{\fp}) \mid \fp^u\). If in fact we do have \(\dnm \theta_{\fp} = \fp^u\), then there are 
\[
\Nm(\fp^u) \left ( 1 - \frac{2}{\Nm(\fp)} \right)
\]
ways of representing \(\theta_{\fp}\) as \(\beta_{\fp} - \gamma_{\fp}\). However, if \(\dnm \theta_{\fp} = \fp^{u'}\) with \(u' < u\), then we have \(\Phi(\fp^u)\) ways of representing it.

Hence, writing
\[
\fL = \prod_{\substack{\fp \\ u = v}} \fp^u, \quad \fM = \prod_{\substack{\fp \\ u \neq v}} \fp^{\min\{u,v\}}, \quad \fN = \prod_{\substack{\fp \\ u \neq v}} \fp^{\max\{u,v\}}
\]
we find that the sum \eqref{eq:Pmnsum1} is bounded above by
\[
\sum_{\substack{\theta \in D_K \\ \frac{\fL \fN}{\dnm \theta} | \fL \\ |\rho(\theta)| < \Delta_{\rho} \\ \hbox{\scriptsize \ for all } \rho \in \Sigma}} \Phi(\fM) \Nm(\fL) \left( \prod_{\fp | (\fL,\fL \fN (\theta))} \left( 1 - \frac{1}{\Nm(\fp)} \right) \right) \prod_{\substack{\fp \mid \fL \\ \fp \nmid \fL\fN(\theta)}} \left( 1 - \frac{2}{\Nm(\fp)} \right).\]

\textbf{Remark.} Our condition \(\frac{\fL \fN}{\dnm \theta} | \fL\) is equivalent to having
\[
(\theta) = \frac{\fa}{\fL \fN}
\]
where \((\fa,\fN) = 1\). In the classical case (or even the case where \(K\) is a principal ideal domain), we could now just pass to a sum over the numerators of our (not necessarily reduced) fractions: however, this is more delicate in general number fields, and so we leave our sum in terms of elements of \(K\) for now. The term \(\fL\fN(\theta)\), which also appears repeatedly, is the equivalent of this numerator, and corresponds to \(c\) in \S3 of \cite{PollingtonVaughan1990}.

Now, since
\begin{align*}
\prod_{\substack{\fp \mid \fL \\ \fp \nmid \fL\fN(\theta)}} \left( 1 - \frac{2}{\Nm(\fp)} \right) &\leq \prod_{\substack{\fp \mid \fL \\ \fp \nmid \fL\fN(\theta)}} \left( 1 - \frac{1}{\Nm(\fp)} \right)^2  \\
&= \frac{\Phi(\fL)^2}{\Nm(\fL)^2} \prod_{\fp \mid (\fL, \fL \fN(\theta))} \left(1 - \frac{1}{\Nm(\fp)} \right)^{-2}
\end{align*}

we have
\begin{equation}
\label{eq:Pmnsum2}
\lambda(\cA_{\fm} \cap \cA_{\fn}) \ll \bm\delta \frac{\Phi(\fL)^2 \Phi(\fM)}{\Nm(\fL)} \sum_{\substack{\theta \in D_K \\ \frac{\fL \fN}{\dnm \theta} | \fL \\ |\rho(\theta)| < \Delta_{\rho} \\ \hbox{\scriptsize \ for all } \rho \in \Sigma}} \prod_{\fp \mid (\fL, \fL\fN(\theta))} \frac{\Nm(\fp)}{\Nm(\fp) - 1},
\end{equation}
where \(\bm\delta\) is the maximum size of the overlap, representing the worst case where one box is entirely contained within another, and given by
\[
\bm\delta = \prod_{\rho \hbox{\scriptsize \ real}} 2 \min \{\psi_{\rho}(\fm), \psi_{\rho}(\fn) \} \prod_{\rho \hbox{\scriptsize \ complex}} \pi \min \{\psi_{\rho}(\fm), \psi_{\rho}(\fn) \}. \]

The product in \eqref{eq:Pmnsum2} can be rewritten:
\[
\prod_{\substack{\fp | \fL \\ \fp | \fL\fN(\theta)}} \frac{\Nm(\fp)}{\Nm(\fp) - 1} = \prod_{\substack{\fp | \fL \\ \fp | \fL\fN(\theta)}} 1 + \frac{1}{\Nm(\fp) - 1} = \sum_{\substack{\fd | \fL \\ \fd | \fL\fN(\theta) \\ \fd \hbox{\scriptsize \ squarefree}}} \frac{1}{\prod_{\fp \mid \fd} \Nm(\fp) - 1} = \sum_{\substack{\fd | \fL \\ \fd | \fL\fN(\theta)}} \frac{\mu(\fd)^2}{\Phi(\fd)}.\]

Hence \eqref{eq:Pmnsum2} becomes
\[
\lambda(\cA_{\fm} \cap \cA_{\fn}) \ll \bm\delta \frac{\Phi(\fL)^2 \Phi(\fM)}{\Nm(\fL)} \sum_{\substack{\theta \in D_K \\ \frac{\fL \fN}{\dnm \theta} | \fL \\ |\rho(\theta)| < \Delta_{\rho} \\ \hbox{\scriptsize \ for all } \rho \in \Sigma}} \sum_{\substack{\fd | \fL \\ \fd | \fL\fN(\theta)}} \frac{\mu(\fd)^2}{\Phi(\fd)},
\]
where \(\mu\) is the obvious version of the M\"obius function on \(I_K\).

We now want to bound the double sum
\[
\sum_{\substack{\theta \in D_K \\ \frac{\fL \fN}{\dnm \theta} | \fL \\ |\rho(\theta)| < \Delta_{\rho} \\ \hbox{\scriptsize \ for all } \rho \in \Sigma}} \sum_{\substack{\fd \mid \fL \\ \fd \mid \fL \fN(\theta)}} \frac{\mu(\fd)^2}{\Phi(\fd)}.\]

Let \(\hat{\fL}\) denote the ideal of smallest norm in the inverse class of \(\fL\) such that \((\hat{\fL},\fL\fN)=~1\); similarly, let \(\hat{\fN}\) denote the ideal of smallest norm in the inverse class of \(\fN\) such that \((\hat{\fN},\fL\fN) = 1\). Then the ideals \(\fL \hat{\fL}\) and \(\fN \hat{\fN}\) are by definition principal: let \(\lambda\) and \(\nu\) generate these ideals respectively. (For uniqueness, choose the generator so that the absolute value of the sum of the embeddings is minimised.)

Next, using that \((\theta) = \frac{\fa}{\fL\fN} \) for some \(\fa\) such that \((\fa,\fN) = 1\), we can write \(\theta = \frac{\alpha}{\lambda \nu} \) for some \(\alpha \in \hat{\fL} \hat{\fN}\) such that \(((\alpha),\fN) = 1\). So our sum becomes 
\[
\sum_{\substack{\alpha \in \hat{\fL}\hat{\fN} \cap \lambda \nu D_K \\ ((\alpha),\fN) = 1 \\ |\rho(\alpha)| < |\rho(\lambda \nu)| \Delta_{\rho} \\ \hbox{\scriptsize \ for all } \rho \in \Sigma}} \sum_{\substack{\fd \mid \fL \\ \fd \mid (\alpha)\hat{\fL}^{-1}\hat{\fN}^{-1}}} \frac{\mu(\fd)^2}{\Phi(\fd)}.\]

Interchanging the order of the sum, and writing 
\[
\bm\Delta  = \prod_{\rho \in \Sigma} \Delta_{\rho}
\]
we find that this is bounded above by
\[
\sum_{\substack{\fd | \fL \\ \Nm(\fd) < \Nm(\fL \fN) \bm\Delta}} \frac{\mu(\fd)^2}{\Phi(\fd)} \sum_{\substack{\alpha \in \fd\hat{\fL}\hat{\fN} \\ ((\alpha),\fN) = 1 \\ |\rho(\alpha)| < |\rho(\lambda \nu)| \Delta_{\rho} \\ \hbox{\scriptsize \ for all } \rho \in \Sigma}} 1.\]

Now, applying Thm 0 on p102 of \cite{LangANT} from two directions, we find that
\[
\sum_{\substack{\alpha \in \fd\hat{\fL}\hat{\fN} \\ ((\alpha),\fN) = 1 \\ |\rho(\alpha)| < |\rho(\lambda \nu)| \Delta_{\rho} \\ \hbox{\scriptsize \ for all } \rho \in \Sigma}} 1 \ll \Nm\left( \tfrac{\fL \fN}{\fd} \right) \bm\Delta \left(1 - \frac{1}{\Nm(\fN)} \right) \ll \sum_{\substack{\beta \in \cO_K \\ ((\beta),\fN) = 1 \\ |\rho(\beta)| < \Delta_{\rho} \left( \Nm( \frac{\fL \fN}{\fd}) \right)^{1/n} \\ \hbox{\scriptsize \ for all } \rho \in \Sigma}} 1,
\]
and hence we get that our double sum is
\[
\ll \sum_{\substack{\fd | \fL \\ \Nm(\fd) < N(\fL \fN) \bm\Delta}} \sum_{\substack{\beta \in \cO_K \\ ((\beta),\fN) = 1 \\ |\rho(\beta)| < \Delta_{\rho} \left( \Nm( \frac{\fL \fN}{\fd}) \right)^{1/n} \\ \hbox{\scriptsize \ for all } \rho \in \Sigma}} \frac{\mu(\fd)^2}{\Phi(\fd)}.\]

Then, reswapping the sum order and writing \(X_{\rho} = \Delta_{\rho} \Nm(\fL \fN)^{1/n}\), we get
\[
\ll \sum_{\substack{\beta \in \cO_K \\ ((\beta),\fN) = 1 \\ |\rho(\beta)| < X_{\rho} \\ \hbox{\scriptsize \ for all } \rho \in \Sigma}} \sum_{\substack{\fd | \fL \\ \Nm(\fd) < \frac{\Nm(\fL \fN) \bm\Delta}{|\Nm(\beta)|}}}  \frac{\mu(\fd)^2}{\Phi(\fd)}.\]

Then it follows from Theorem~\ref{thm:Merten3} that we have
\[
\sum_{\Nm(\fd) < X} \frac{\mu^2(\fd)}{\Phi(\fd)}\ll \log X,
\]
and hence we find that
\[
\lambda(\cA_{\fm} \cap \cA_{\fn}) \ll \bm\delta \frac{\Phi(\fL)^2 \Phi(\fM)}{\Nm(\fL)} \sum_{\substack{\beta \in \cO_K \\ ((\beta),\fN) = 1 \\ |\rho(\beta)| < X_{\rho}\\ \hbox{\scriptsize \ for all } \rho \in \Sigma}} \log \left( \frac{\Nm(\fL \fN) \bm\Delta}{|\Nm(\beta)|} \right).\]

Next, write
\[
\sum_{\substack{\beta \in \cO_K \\ ((\beta),\fN) = 1 \\ |\rho(\beta)| < X_{\rho} \\ \hbox{\scriptsize \ for all } \rho \in \Sigma}} \log \left( \frac{\Nm(\fL \fN) \bm\Delta}{|\Nm(\beta)|} \right) = \sum_{\substack{\beta \in \cO_K \\ ((\beta),\fN) = 1 \\ |\rho(\beta)| < X_{\rho} \\ \hbox{\scriptsize \ for all } \rho \in \Sigma}} \int_{|\Nm(\beta)|}^{\Nm(\fL \fN) \bm\Delta} \frac{\diff{\theta}}{\theta}.\]

(Note that this \(\theta\) is distinct from the previous use of \(\theta\) in this paper.) This is then equal to
\[
\int_1^{\Nm(\fL \fN) \bm\Delta} \frac{1}{\theta} \bigg(  \sum_{\substack{\beta \in \cO_K \\ ((\beta),\fN) = 1 \\ |\Nm(\beta)| \leq \theta \\ |\rho(\beta)| < X_{\rho} \\ \hbox{\scriptsize \ for all } \rho \in \Sigma}} 1 \bigg) \ \diff{\theta}
\]

So now we want to bound the sum. We have
\[
\sum_{\substack{\beta \in \cO_K \\ ((\beta),\fN) = 1 \\ |\Nm(\beta)| \leq \theta \\ |\rho(\beta)| < X_{\rho} \\ \hbox{\scriptsize \ for all } \rho \in \Sigma}} 1 = \sum_{\substack{\fb \in \cP_K \\ (\fb,\fN) = 1 \\ \Nm(\fb) \leq \theta}} \sum_{\substack{\beta \hbox{\scriptsize \  generates } \fb \\|\rho(\beta)| < X_{\rho} \\ \hbox{\scriptsize \ for all } \rho \in \Sigma}} 1,
\]
where \(\cP_K\) denotes the principal ideals of \(\cO_K\), and we also have that
\[
\sum_{\substack{\beta \hbox{\scriptsize \  generates } \fb \\|\rho(\beta)| < X_{\rho} \\ \hbox{\scriptsize \ for all } \rho \in \Sigma}} 1 \ll \log \left( \frac{\bm\Delta \Nm(\fL \fN)}{\Nm(\fb)} \right)^{\ell},
\]
where \(\ell\) denotes the rank of the group of units of \(\cO_K\). So
\begin{align*}
\sum_{\substack{\beta \in \cO_K \\ ((\beta),\fN) = 1 \\ |\Nm(\beta)| \leq \theta \\ |\rho(\beta)| < \Delta_{\rho} \Nm(\fL \fN)^{1/n} \\ \hbox{\scriptsize \ for all } \rho \in \Sigma}} 1 &\ll  \sum_{1 \leq \xi \leq \theta} \sum_{\substack{\fb \in \cP_K \\ (\fb,\fN) = 1 \\ \Nm(\fb) = \xi}} \log \left( \frac{\bm\Delta \Nm(\fL \fN)}{\Nm(\fb)} \right)^{\ell} \\ &= \sum_{1 \leq \xi \leq \theta} \log \left( \frac{\bm\Delta \Nm(\fL \fN)}{\xi} \right)^{\ell} \sum_{\substack{\fb \in \cP_K \\ (\fb,\fN) = 1 \\ \Nm(\fb) = \xi}} 1 .\end{align*}

Now, write \(X = \bm\Delta \Nm(\fL \fN)\), and let \( \phi(\xi) = \log \left( \frac{X}{\xi} \right)^{\ell} \) and 
\[
a_{\xi} = \# \Big\{ \fb \in \cP_K \ \Big| \ (\fb,\fN) = 1, \ \Nm(\fb) = \xi \Big\}.\] 

Then the sum is equal to \(\sum_{1 \leq \xi \leq \theta} a_{\xi} \phi(\xi)\). Applying Abel summation, we get
\[
\sum_{1 \leq \xi \leq \theta} a_{\xi} \phi(\xi) = \log \left( \frac{X}{\theta} \right)^{\ell} A(\theta) + \ell \int_1^{\theta} \frac{A(t)}{t} \log \left( \frac{X}{t} \right)^{\ell-1} \ \diff{t},
\]
where
\[
A(t) = \sum_{1 \leq \xi \leq t} a_{\xi} = \sum_{\substack{\fb \in \cP_K \\ (\fb,\fN) = 1 \\ \Nm(\fb) \leq t}} 1.\]

We then want to take the integral of this sum divided through by \(\theta\):
\begin{equation}
\label{eq:pairofintegrals}
\int_1^X \log \left( \frac{X}{\theta} \right)^{\ell} \frac{A(\theta)}{\theta} \ \diff{\theta} + \ell \int_1^X \frac{1}{\theta} \int_1^{\theta} \frac{A(t)}{t}  \log \left( \frac{X}{t} \right)^{\ell-1} \ \diff{t} \ \diff{\theta}.\end{equation}

At this point, we want to apply the following lemma, to bound \(A(t)\):

\begin{lemma}
\label{lem:sievetheory}
Let \(\fn \in I_K\). Then
\[
\sum_{\substack{\Nm(\fa) \leq X \\ (\fa,\fn) = 1}} 1 \ll X \prod_{\substack{\fp \mid \fn \\ \Nm(\fp) \leq X}} \left( 1 - \frac{1}{\Nm(\fp)} \right)
\]
\end{lemma}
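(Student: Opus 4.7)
The plan is to prove this as a consequence of a Selberg-type upper bound sieve adapted to the number field setting. First I would observe that any prime ideal $\fp$ of $\cO_K$ dividing $\fn$ with $\Nm(\fp) > X$ cannot divide any integral ideal $\fa$ of norm at most $X$. So if $\fn'$ denotes the product of the distinct prime divisors of $\fn$ of norm at most $X$, then $(\fa, \fn) = 1$ is equivalent to $(\fa, \fn') = 1$ throughout the range of the sum, and it suffices to show
\[
\#\{\fa \subseteq \cO_K : \Nm(\fa) \leq X, \ (\fa, \fn') = 1\} \ll X \frac{\Phi(\fn')}{\Nm(\fn')} = X \prod_{\fp \mid \fn'} \left(1 - \frac{1}{\Nm(\fp)}\right).
\]

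To apply Selberg's sieve I would introduce real weights $\lambda_\fd$ supported on $\fd \mid \fn'$ with $\Nm(\fd) \leq z$ for a truncation parameter $z$, normalised by $\lambda_{\cO_K} = 1$. The indicator of the event $(\fa, \fn') = 1$ is then bounded above by $\bigl(\sum_{\fd \mid (\fa, \fn')} \lambda_\fd\bigr)^2$. Summing over $\fa$ with $\Nm(\fa) \leq X$ and applying Lang's ideal-counting estimate $\#\{\fa : \Nm(\fa) \leq Y\} = \alpha_K Y + O(Y^{1 - 1/n})$ (Theorem 0 of~\cite{LangANT}) reduces the problem to minimising the quadratic form $\sum_{\fd_1, \fd_2} \lambda_{\fd_1}\lambda_{\fd_2} / \Nm([\fd_1, \fd_2])$. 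The standard Selberg optimisation attains the minimum $1/G(z)$, where
\[
G(z) = \sum_{\substack{\fd \mid \fn' \\ \Nm(\fd) \leq z}} \frac{\mu^2(\fd)}{\Phi(\fd)},
\]
so the main term of the resulting upper bound is $\alpha_K X / G(z)$.

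The identity $G(\infty) = \prod_{\fp \mid \fn'} \Nm(\fp)/(\Nm(\fp) - 1) = \Nm(\fn')/\Phi(\fn')$ produces exactly the desired shape. The main obstacle is choosing $z$ so that two conditions hold simultaneously: (i) $G(z) \gg G(\infty)$, so that truncation costs at most a constant factor; and (ii) the cumulative remainder from Lang's asymptotic, which is of order $\sum_{\Nm(\fd) \leq z^2} \tau(\fd) (X/\Nm(\fd))^{1-1/n}$, is absorbed into the main term. The choice $z = X^{1/2}$ typically works, with (i) verified via the Mertens-type estimates from Theorems~\ref{thm:Merten2} and~\ref{thm:Merten3}. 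A potential complication arises when $\Phi(\fn')/\Nm(\fn')$ is very small (that is, when $\fn'$ has very many small prime factors): in that regime, a Brun-type combinatorial sieve, exploiting the level-of-distribution parameter more flexibly, may be substituted in order to push the error term down to a level dominated by the main term.
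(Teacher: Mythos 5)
Your overall strategy is the same as the paper's: reduce to the squarefree kernel of $\fn$ supported on primes of norm up to the desired level, run the Selberg upper bound sieve with weights supported on divisors of this kernel, use the number-field ideal-counting asymptotic for the congruence sums, and compare $G(z)$ to $G(\infty) = \Nm(\fn')/\Phi(\fn')$ via Mertens-type estimates. So the architecture is right. The genuine gap is in your choice of truncation level $z$ and the accompanying error analysis.

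You propose $z = X^{1/2}$ and say this ``typically works.'' In the rational case that is true because counting $d \mid a$, $a \leq X$ has error $O(1)$, so the remainder is $O(z^2)$. But in a number field of degree $n \geq 2$, Lang's Theorem 0 (and equally the Ram Murty--Van Order refinement the paper cites) gives
\[
\#\{\fa : \Nm(\fa) \leq X, \ \fd \mid \fa\} = \frac{\rho_K X}{\Nm(\fd)} + O\!\left(\left(\frac{X}{\Nm(\fd)}\right)^{1-\frac{1}{n}}\right),
\]
so each individual remainder is of size $(X/\Nm(\fd))^{1-1/n}$, not $O(1)$. Even with the crudest bound the cumulative remainder is at best $O(X^{1-1/n} z^2)$, and the more careful bound you wrote, $\sum_{\Nm(\fd) \leq z^2} \tau(\fd)(X/\Nm(\fd))^{1-1/n}$, is of size roughly $X^{1-1/n}z^{2/n}\log z$. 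With $z = X^{1/2}$ this is $\gg X \log X$, which swamps the main term $X/G(z)$; the sieve bound collapses to something trivial. This is not a corner case that a Brun-type sieve fixes --- it is a systematic failure of the level choice, independent of how many small primes $\fn'$ has. (Your final paragraph diagnoses the wrong bottleneck: the problem is not small $\Phi(\fn')/\Nm(\fn')$, it is the degree-dependent ideal-counting error.)

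The fix, as the paper does, is to take $z$ polynomially smaller than $X^{1/(2n)}$ (the paper uses $z = X^{1/(2n+1)}$), so that $X^{1-1/n}z^2$ is a genuine power saving over $X$. The price is that the main term now involves $\prod_{\Nm(\fp) < z,\ \fp \mid \fn}(1-1/\Nm(\fp))$ rather than the product up to $X$, and you must then show via Theorem~\ref{thm:Merten3} that
\[
\prod_{\substack{\Nm(\fp) < z \\ \fp \mid \fn}}\left(1-\frac{1}{\Nm(\fp)}\right) \ll \prod_{\substack{\Nm(\fp) < X \\ \fp \mid \fn}}\left(1-\frac{1}{\Nm(\fp)}\right),
\]
which works because $z$ is a fixed power of $X$ so the two full Mertens products differ only by a bounded factor. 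That comparison step is essential and is missing from your sketch; with $z = X^{1/2}$ you would not even have needed it, which is another sign the level choice has not been thought through in the number-field setting.
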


This lemma will be proven in \S\ref{sec:sievetheory}. Assuming this result, we find that the sum of integrals in \eqref{eq:pairofintegrals} is
\begin{align*}
&\ll \overbrace{\int_1^X\log \left( \frac{X}{\theta} \right)^{\ell} \prod_{\substack{\fp \mid \fN \\ \Nm(\fp) \leq \theta}} \left( 1 - \frac{1}{\Nm(\fp)} \right) \ \diff{\theta}}^{I_1(X,\ell)} \\ &\qquad\qquad  + \ \ \overbrace{\int_1^X \frac{1}{\theta} \int_1^{\theta} \log \left( \frac{X}{t} \right)^{\ell-1} \prod_{\substack{\fp \mid \fN \\ \Nm(\fp) \leq t}} \left( 1 - \frac{1}{\Nm(\fp)} \right)   \ \diff{t} \ \diff{\theta}}^{I_2(X,\ell)}.\end{align*}

Now we bound these two integrals. We split the range of integration in \(I_1(X,\ell)\):
\[
I_1(X,\ell) = \int_1^{\sqrt{X}}\log \left( \frac{X}{\theta} \right)^{\ell} \prod_{\substack{\fp \mid \fN \\ \Nm(\fp) \leq \theta}} \left( 1 - \frac{1}{\Nm(\fp)} \right) \ \diff{\theta}  + \int_{\sqrt{X}}^X\log \left( \frac{X}{\theta} \right)^{\ell} \prod_{\substack{\fp \mid \fN \\ \Nm(\fp) \leq \theta}} \left( 1 - \frac{1}{\Nm(\fp)} \right) \ \diff{\theta} .\]

The integrand in the first integral is at most \(\log(X)^{\ell}\), and hence the integral is bounded by \(\sqrt{X} \log(X)^{\ell} \ll X^{1/2 + \varepsilon}\) for any \(\varepsilon > 0\). In the second integral, the product can be bounded by
\[
\prod_{\substack{\fp \mid \fN \\ \Nm(\fp) \leq \sqrt{X}}} \left( 1 - \frac{1}{\Nm(\fp)} \right),
\]
and hence the integral can be bounded by
\[
\prod_{\substack{\fp \mid \fN \\ \Nm(\fp) \leq \sqrt{X}}} \left( 1 - \frac{1}{\Nm(\fp)} \right) \int_{\sqrt{X}}^X \log \left( \frac{X}{\theta} \right)^{\ell} \ \diff{\theta}.\]

But
\[
\int_{\sqrt{X}}^X \log \left( \frac{X}{\theta} \right)^{\ell} \ \diff{\theta} \leq X \ell!,
\]
and hence we get
\[
\int_{\sqrt{X}}^X\log \left( \frac{X}{\theta} \right)^{\ell} \prod_{\substack{\fp \mid \fN \\ \Nm(\fp) \leq \theta}} \left( 1 - \frac{1}{\Nm(\fp)} \right) \ \diff{\theta} \ll X \prod_{\substack{\fp \mid \fN \\ \Nm(\fp) \leq \sqrt{X}}} \left( 1 - \frac{1}{\Nm(\fp)} \right).\]

Then, by Theorem~\ref{thm:Merten3} we have
\[
\prod_{\substack{\fp \mid \fN \\ \Nm(\fp) \leq \sqrt{X}}} \left( 1 - \frac{1}{\Nm(\fp)} \right) \ll \prod_{\substack{\fp \mid \fN \\ \Nm(\fp) \leq X}} \left( 1 - \frac{1}{\Nm(\fp)} \right)
\]
and
\[
X^{1/2 + \varepsilon} \ll X \prod_{\substack{\fp \mid \fN \\ \Nm(\fp) \leq X}} \left( 1 - \frac{1}{\Nm(\fp)} \right),
\]
and hence it follows that
\[
I_1(X,\ell) \ll X \prod_{\substack{\fp \mid \fN \\ \Nm(\fp) \leq X}} \left( 1 - \frac{1}{\Nm(\fp)} \right).\]

Now we need to tackle
\[
I_2(X,\ell) = \int_1^X \frac{1}{\theta} \int_1^{\theta} \prod_{\substack{\fp \mid \fN \\ \Nm(\fp) \leq t}} \left( 1 - \frac{1}{\Nm(\fp)} \right)  \log \left( \frac{X}{\theta} \right)^{\ell-1} \ \diff{t} \ \diff{\theta}.\]

But this is just
\[
\int_1^X \frac{I_1(\theta,\ell - 1)}{\theta}  \ \diff{\theta} \ll \int_1^X \prod_{\substack{\fp \mid \fN \\ \Nm(\fp) \leq \theta}} \left( 1 - \frac{1}{\Nm(\fp)} \right) \ \diff{\theta} = I_1(X,0).\]

So then we have
\[
\lambda(\cA_{\fm} \cap \cA_{\fn}) \ll \bm\delta \frac{\Phi(\fL)^2 \Phi(\fM)}{\Nm(\fL)}  \Nm(\fL\fN)\bm\Delta  \prod_{\substack{\fp \mid \fN \\ \Nm(\fp) \leq X}} \left( 1 - \frac{1}{\Nm(\fp)} \right).\]

Simplifying a little gives
\[
\lambda(\cA_{\fm} \cap \cA_{\fn}) \ll \bm\delta \bm\Delta \Phi(\fL)^2 \Phi(\fM) \Nm(\fN)  \prod_{\substack{\fp \mid \fN \\ \Nm(\fp) \leq X}} \left( 1 - \frac{1}{\Nm(\fp)} \right).\]

Recall that
\[
N(\fN) \prod_{\fp \mid \fN} \left( 1 - \frac{1}{\Nm(\fp)} \right) = \Phi(\fN),
\]
and hence
\[
\lambda(\cA_{\fm} \cap \cA_{\fn}) \ll \bm\delta \bm\Delta \Phi(\fL)^2 \Phi(\fM) \Phi(\fN) \prod_{\substack{\fp \mid \fN \\ \Nm(\fp) >  \Nm(\fL\fN)\bm\Delta}} \left( 1 - \frac{1}{\Nm(\fp)} \right)^{-1}.\]

Since \( \Phi(\fL)^2 \Phi(\fM) \Phi(\fN) = \Phi(\fm) \Phi(\fn)\), and we have
\[
\left[ \fp \mid \fN \Leftrightarrow \fp \mid \frac{\fm \fn}{(\fm,\fn)^2}  \right], \quad \fL\fN = \frac{\fm \fn}{(\fm,\fn)}, \quad \bm\delta \bm \Delta = 2^s \pi^t \Psi(\fm) \Psi(\fn),
\]
we get
\[
\lambda(\cA_{\fm} \cap \cA_{\fn}) \ll \lambda(\cA_{\fm}) \lambda(\cA_{\fn}) \prod_{\substack{\fp \mid \frac{\fm \fn}{(\fm,\fn)^2} \\ \Nm(\fp) > \bm\Delta \Nm\left(\frac{\fm\fn}{(\fm,\fn)}\right)}} \left(1 - \frac{1}{\Nm(\fp)} \right)^{-1}.\]

So since \(D(\fm,\fn) = \bm\Delta \Nm\left(\frac{\fm\fn}{(\fm,\fn)}\right)\)  we have our result.
\end{proof}

\section{A special case of Selberg's sieve in number fields}
\label{sec:sievetheory}

In this section, we prove Lemma~\ref{lem:sievetheory}. This will be a special case of the Selberg sieve in number fields; however, we believe that providing a full proof of this simple case to be instructive in its own right. We start by setting \(z = X^{\frac{1}{2n+1}}\), and let
\[
\fP = \prod_{\substack{\fp | \fn \\ \Nm(\fp) < z}} \fp. \]

Then, as an auxiliary to studying the sum in question, we study the sum
\[
\sum_{\substack{\Nm(\fa) \leq X \\ (\fa,\fP) = 1}} 1. \]

(Note that since \(\fP | \fn\), we have that \((\fa,\fn) = 1\) implies \((\fa,\fP) = 1\), and hence this sum is an upper bound for the sum in Lemma~\ref{lem:sievetheory}.)

Just as in the standard Selberg sieve, we note that if for each \(\fd \in I_K\) we assign some \(\lambda_{\fd} \in \RR\), then as long as we require \(\lambda_{\cO_K} = 1\), we have
\[
\sum_{\substack{\Nm(\fa) \leq X \\ (\fa,\fP) = 1}} 1 \leq \sum_{\Nm(\fa) \leq X} \left( \sum_{\fd | (\fa,\fP)} \lambda_{\fd} \right)^2. \]

We can then manipulate this sum a little:
\begin{align*}
 \sum_{\substack{\Nm(\fa) \leq X \\ (\fa,\fP) = 1}} 1 &\leq \sum_{\Nm(\fa) \leq X} \left( \sum_{\fd | (\fa,\fP)} \lambda_{\fd} \right)^2 \\ &= \sum_{\Nm(\fa) \leq X} \sum_{\fd_1, \fd_2 | (\fa,\fP)} \lambda_{\fd_1} \lambda_{\fd_2} \\ &= \sum_{\fd_1,\fd_2 | \fP} \lambda_{\fd_1} \lambda_{\fd_2} \sum_{\substack{\Nm(\fa) \leq X \\ \fd_1 | \fa \\ \fd_2 | \fa}} 1.
\end{align*}

By Theorem 5 in \cite{RamMurtyVanOrder2007}, we have
\[
\sum_{\substack{\Nm(\fa) \leq X \\ \fd | \fa}} 1 = \frac{\kappa_K X}{\Nm(\fd)} + R_{\fd},
\]
where
\[
|R_{\fd}| \leq c_K \left(\frac{X}{\Nm(\fd)} \right)^{1-\frac{1}{n}},
\]
and where \(\kappa_K\) and \(c_K\) are constants depending only on \(K\). So writing \(\fD := \lcm(\fd_1,\fd_2)\), we have
\[
 \sum_{\substack{\Nm(\fa) \leq X \\ (\fa,\fP) = 1}} 1 \leq \kappa_K X \sum_{\fd_1,\fd_2 | \fP} \frac{\lambda_{\fd_1} \lambda_{\fd_2}}{\Nm(\fD)} + \sum_{\fd_1,\fd_2 | \fP} |\lambda_{\fd_1} \lambda_{\fd_2} R_{\fD}|.
\]

We write
\[
\Sigma_1 = \sum_{\fd_1,\fd_2 | \fP} \frac{\lambda_{\fd_1} \lambda_{\fd_2}}{\Nm(\fD)} = \sum_{\fd_1,\fd_2 | \fP} \frac{\lambda_{\fd_1} \lambda_{\fd_2}}{\Nm(\fd_1) \Nm(\fd_2)}\Nm(\gcd(\fd_1,\fd_2)),
\]
\[\Sigma_2 = \sum_{\fd_1,\fd_2 | \fP} |\lambda_{\fd_1} \lambda_{\fd_2} R_{\fD}|.\]

We first note that we will choose our \(\lambda_{\fd}\) such that \(\lambda_{\fd} = 0\) for \(\Nm(\fd) \geq z\). Then, using the fact that
\[
\Nm(\fm) = \sum_{\fl | \fm} \Phi(\fl),
\]
we can change the order of summation to give
\begin{align*}
\Sigma_1 &= \sum_{\fd_1,\fd_2 | \fP} \frac{\lambda_{\fd_1} \lambda_{\fd_2}}{\Nm(\fd_1) \Nm(\fd_2)} \sum_{\fl \, \mid \, \gcd(\fd_1,\fd_2)} \Phi(\fl) \\ &= \sum_{\substack{\fl | \fP \\ \Nm(\fl) < z}} \Phi(\fl) \left( \sum_{\substack{\fd | \fP \\ \fl | \fd}}  \frac{\lambda_{\fd}}{\Nm(\fd)} \right)^2.
\end{align*}

Now we actually start setting up the Selberg weights \(\lambda_{\fd}\).

For \(\fv \in I_K\) and \(x > 0\), define
\[
G_{\fv}(x) = \sum_{\substack{\Nm(\fr) < x \\ (\fr,\fv) = 1 \\ \fr \mid \fn}} \frac{\mu^2(\fr)}{\Phi(\fr)},
\]
and let 
\[
G := G_{\cO_K}(z) = \sum_{\substack{\Nm(\fr) < z \\ \fr \mid \fn}} \frac{\mu^2(\fr)}{\Phi(\fr)}.\]

Then for \(\fd \mid \fP\), let
\[
\lambda_{\fd} = \frac{1}{G} \frac{\Nm(\fd) \mu(\fd)}{\Phi(\fd)} G_{\fd} \left( \frac{z}{\Nm(\fd)} \right). \]

(Note that with these choices, we have \(\lambda_{\fd} = 0\) for \(\Nm(\fd) \geq z\) as assumed, since \(G_{\fv}(x) = 0\) for \(x \leq 1\).)

We claim that with these choices we have \(\Sigma_1 = \frac{1}{G}\). Recall that
\[
\Sigma_1 = \sum_{\substack{\fl | \fP \\ \Nm(\fl) < z}} \Phi(\fl) \left( \sum_{\substack{\fd | \fP \\ \fl | \fd}}  \frac{\lambda_{\fd}}{\Nm(\fd)} \right)^2,
\]
and consider the sum inside the square. We have
\begin{align*}
\sum_{\substack{\fd | \fP \\ \fl | \fd}} \frac{\lambda_{\fd}}{\Nm(\fd)}  &= \frac{1}{G} \sum_{\substack{\fd | \fP \\ \fl | \fd}} \frac{\mu(\fd)}{\Phi(\fd)} G_{\fd} \left( \frac{z}{\Nm(\fd)} \right) \\ &= \frac{1}{G} \sum_{\substack{\fm | \fP \\ (\fm,\fl) = 1}} \frac{\mu(\fl \fm)}{\Phi(\fl \fm)} G_{\fl \fm} \left( \frac{z}{\Nm(\fl \fm)} \right) \\ &= \frac{\mu(\fl)}{\Phi(\fl) G} \sum_{\substack{\fm | \fP \\ (\fm,\fl) = 1}} \frac{\mu(\fm)}{\Phi(\fm)} \sum_{\substack{\Nm(\fr) < \frac{z}{\Nm(\fl\fm)} \\ (\fr,\fl\fm) = 1 \\ \fr | \fn}} \frac{\mu^2(\fr)}{\Phi(\fr)}.
\end{align*}

We claim that the double sum is just equal to \(1\), and hence that this expression is equal to \(\frac{\mu(\fl)}{\Phi(\fl) G}\). To see this, combine the two sums and then exchange the order (writing \(\ff = \fm \fr\)):
\begin{align*}
\sum_{\substack{\fm | \fP \\ (\fm,\fl) = 1}} \frac{\mu(\fm)}{\Phi(\fm)} \sum_{\substack{\Nm(\fr) < \frac{z}{\Nm(\fl\fm)} \\ (\fr,\fl\fm) = 1 \\ \fr | \fn}} \frac{\mu^2(\fr)}{\Phi(\fr)} &= \sum_{\substack{\fm | \fP \\ (\fm,\fl) = 1}} \sum_{\substack{\Nm(\fr) < \frac{z}{\Nm(\fl\fm)} \\ (\fr,\fl\fm) = 1 \\ \fr | \fn}} \mu(\fm) \frac{\mu^2(\fm \fr)}{\Phi(\fm \fr)} 
\\ &= \sum_{\substack{\Nm(\ff) < \frac{z}{\Nm(\fl)} \\ (\ff,\fl) = 1 \\ \ff \mid \fn}} \frac{\mu^2(\ff)}{\Phi(\ff)} \sum_{\fm \mid \ff} \mu(\fm).
\end{align*}

Then since
\[
\sum_{\fd \mid \fn} \mu(\fd) = \left\{ \begin{array}{rl} 1 & \hbox{if }\fn = \cO_K, \\ 0 & \hbox{otherwise,} \end{array} \right.
\]
we have that the only non-zero term in the outer sum is \(\ff = \cO_K\), giving a value of \(1\) as required. Hence
\[
\sum_{\substack{\fd | \fP \\ \fl | \fd}} \frac{\lambda_{\fd}}{\Nm(\fd)} = \frac{\mu(\fl)}{\Phi(\fl) G},
\]
and therefore 
\[
\Sigma_1 = \sum_{\substack{\fl | \fP \\ \Nm(\fl) < z}} \Phi(\fl) \left( \frac{\mu(\fl)}{\Phi(\fl) G} \right)^2 =  \frac{1}{G^2} \sum_{\substack{\fl \mid \fP \\ \Nm(\fl) < z}} \frac{\mu(\fl)^2}{\Phi(\fl)}.\]

Then since the squarefree divisors of \(\fP\) with norm \(< z\) are exactly the squarefree divisors of \(\fn\) with norm \(< z\), the sum is just equal to \(G\), and hence
\[
\Sigma_1 = \frac{1}{G}.\]

We now put a lower bound on \(G\) (and hence an upper bound on \(\Sigma_1\)). Setting
\[
\fk = \prod_{\substack{\Nm(\fp) < z \\ \fp \nmid \fn}} \fp,
\]
we have
\[
G = \sum_{\substack{\Nm(\fr) < z \\ (\fr,\fk) = 1}} \frac{\mu^2(\fr)}{\Phi(\fr)}.\]

Now, note that we have
\begin{align*}
\sum_{\Nm(\fr) < z} \frac{\mu^2(\fr)}{\Phi(\fr)} &= \sum_{\fl \mid \fk} \sum_{\substack{\Nm(\fr) < z \\ (\fr,\fk) = \fl}} \frac{\mu^2(\fr)}{\Phi(\fr)} \\
&= \sum_{\fl \mid \fk} \sum_{\substack{\Nm(\fh) < \frac{z}{\Nm(\fl)} \\ (\fh,\fk/\fl) = 1 \\ (\fh,\fl) = 1}} \frac{\mu^2(\fl \fh)}{\Phi(\fl \fh)} \\ &= \sum_{\fl \mid \fk} \frac{\mu^2(\fl)}{\Phi(\fl)} \sum_{\substack{\Nm(\fr) < \frac{z}{\Nm(\fl)} \\ (\fr,\fk) = 1}} \frac{\mu^2(\fr)}{\Phi(\fr)} \\
&\leq \sum_{\fl \mid \fk} \frac{\mu^2(\fl)}{\Phi(\fl)} \sum_{\substack{\Nm(\fr) < z \\ (\fr,\fk) = 1}} \frac{\mu^2(\fr)}{\Phi(\fr)},
\end{align*}
and hence
\[
G \geq \left( \sum_{\fl \mid \fk} \frac{\mu^2(\fl)}{\Phi(\fl)} \right)^{-1} \sum_{\Nm(\fr) < z} \frac{\mu^2(\fr)}{\Phi(\fr)}.\]

Defining \(\kappa(\fn)\) to be the squarefree divisor of \(\fn\) of largest norm, we have
\[
\sum_{\Nm(\fr) < z} \frac{\mu^2(\fr)}{\Phi(\fr)} = \sum_{\substack{\fn \in I_K \\ \Nm(\kappa(\fn)) < z}} \frac{1}{\Nm(\fn)} > \sum_{\Nm(\fn) < z} \frac{1}{\Nm(\fn)} \gg \log z.\]

We also have
\[
\left( \sum_{\fl \mid \fk} \frac{\mu^2(\fl)}{\Phi(\fl)} \right)^{-1} = \prod_{\fp \mid \fk} \left(1 + \frac{1}{\Nm(\fp) - 1} \right)^{-1} = \prod_{\fp \mid \fk} \left(1 - \frac{1}{\Nm(\fp)} \right),
\]
giving
\[
\Sigma_1 \ll \prod_{\fp \mid \fk} \left(1 - \frac{1}{\Nm(\fp)} \right)^{-1} \frac{1}{\log z}.\]

Now noting that we have that \(\fp \mid \fk \Leftrightarrow \fp \nmid \fn\) for \(\Nm(\fp) < z\) by the definition of \(\fk\), we have
\[
\Sigma_1 \ll \prod_{\substack{\Nm(\fp) < z \\ \fp \nmid \fn}} \left( 1 - \frac{1}{\Nm(\fp)} \right)^{-1} \frac{1}{\log z}.\]

We can rewrite
\[
\prod_{\substack{\Nm(\fp) < z \\ \fp \nmid \fn}} \left( 1 - \frac{1}{\Nm(\fp)} \right)^{-1} \frac{1}{\log z} = \left( \log z \prod_{\Nm(\fp) < z} \left( 1 - \frac{1}{\Nm(\fp)} \right)\right)^{-1}  \prod_{\substack{\Nm(\fp) < z \\ \fp \mid \fn}} \left( 1 - \frac{1}{\Nm(\fp)} \right).\]

But then the first term on the RHS is bounded by Theorem~\ref{thm:Merten3}. So we get
\[
\Sigma_1 \ll \prod_{\substack{\Nm(\fp) < z \\ \fp \mid \fn}} \left( 1 - \frac{1}{\Nm(\fp)} \right).\]

Now we can tackle \(\Sigma_2\). Recall that we defined
\[
\Sigma_2 = \sum_{\fd_1, \fd_2 | \fP} |\lambda_{\fd_1} \lambda_{\fd_2} R_{\fD}|,
\]
where 
\[|R_{\fD}| \leq c_k \left(\frac{X}{\Nm(\fD)} \right)^{1-\frac{1}{n}} \aand \lambda_{\fd} = \frac{1}{G} \frac{\Nm(\fd) \mu(\fd)}{\Phi(\fd)} G_{\fd} \left( \frac{z}{\Nm(\fd)} \right).\]

First, we'll show that each of the \(\lambda_{\fd}\) satisfy \(|\lambda_{\fd}| \leq 1\). We analyse the sum \(G\) considering the factors the terms share with \(\fd\):
\[
G = \sum_{\substack{\Nm(\fr) < z \\ \fr \mid \fn}} \frac{\mu^2(\fr)}{\Phi(\fr)} = \sum_{\fl \mid \fd} \sum_{\substack{\Nm(\fr) < z \\ \fr \mid \fn \\ (\fr,\fd) = \fl}} \frac{\mu^2(\fr)}{\Phi(\fr)}.\]

Decompose \(\fr = \fl \fh\): we get
\[
G = \sum_{\fl | \fd} \sum_{\substack{\Nm(\fh) < \frac{z}{\Nm(\fl)} \\ (\fh,\fd/\fl) = 1 \\ (\fh,\fl) = 1 \\ \fh | \fn}} \frac{\mu^2(\fl \fh)}{\Phi(\fl \fh)} = \sum_{\fl | \fd} \frac{\mu^2(\fl)}{\Phi(\fl)} \sum_{\substack{\Nm(\fh) < \frac{z}{\Nm(\fl)} \\ (\fh,\fd/\fl) = 1 \\ (\fh,\fl) = 1 \\ \fh | \fn}} \frac{\mu^2(\fh)}{\Phi(\fh)}.\]

Now, the innermost sum is just equal to \(G_{\fd}(\frac{z}{\Nm(\fl)})\), and since \(\fl \mid \fd\), we have \(\Nm(\fl) \leq \Nm(\fd)\), and hence
\[
G_{\fd} \left( \frac{z}{\Nm(\fl)} \right) \geq G_{\fd} \left( \frac{z}{\Nm(\fd)} \right).\]

So
\[
G = \sum_{\fl \mid \fd} \frac{\mu^2(\fl)}{\Phi(\fl)} G_{\fd} \left( \frac{z}{\Nm(\fl)} \right) \geq G_{\fd} \left( \frac{z}{\Nm(\fd)} \right) \sum_{\fl \mid \fd} \frac{\mu^2(\fl)}{\Phi(\fl)}.\]

Finally, we have
\[
\sum_{\fl \mid \fd} \frac{\mu^2(\fl)}{\Phi(\fl)} = \prod_{\fp \mid \fd} \left( 1 + \frac{1}{\Nm(\fp) - 1} \right) = \prod_{\fp \mid \fd} \left( 1 - \frac{1}{\Nm(\fp)} \right)^{-1},
\]
giving us
\[
G \geq \frac{\Nm(\fd)}{\Phi(\fd)}  G_{\fd} \left( \frac{z}{\Nm(\fd)} \right).\]

So by our definition of \(\lambda_{\fd}\), we have \(|\lambda_{\fd}| \leq 1\). Now, since we also have \(\lambda_{\fd} = 0\) for \(\Nm(\fd) \geq z\), we get
\[
\Sigma_2 \leq \sum_{\substack{\Nm(\fd_1) < z \\ \Nm(\fd_2) < z}} |R_{\fD}| \leq c_K X^{1-\frac{1}{n}} \sum_{\substack{\Nm(\fd_1) < z \\ \Nm(\fd_2) < z}}  \frac{1}{\Nm(\fD)^{1-\frac{1}{n}}} \leq c_K X^{1-\frac{1}{n}} \left( \sum_{\Nm(\fa) < z} 1 \right)^2.
\]

Then, since we know that
\[
\sum_{\Nm(\fa) < z} 1 \ll z,
\]
we get
\[
\Sigma_2 \ll X^{1 - \frac{1}{n}} z^2 = X^{1-\frac{1}{n(2n+1)}}.\]

So to recap, we have
\[
 \sum_{\substack{\Nm(\fa) \leq X \\ (\fa,\fn) = 1}} 1 \leq \sum_{\substack{\Nm(\fa) \leq X \\ (\fa,\fP) = 1}} 1 \leq \kappa_K X \Sigma_1 + \Sigma_2,
\]
where
\[
\Sigma_1 \ll \prod_{\substack{\Nm(\fp) < X^{\frac{1}{2n+1}} \\ \fp \mid \fn}} \left( 1 - \frac{1}{\Nm(\fp)} \right), \quad \Sigma_2 \ll X^{1-\frac{1}{n(2n+1)}}.\]

Then, since we know that
\[
 \prod_{\Nm(\fp) < y} \left( 1 - \frac{1}{\Nm(\fp)} \right) \asymp \frac{1}{\log y},
\]
we find that
\[
 \prod_{\Nm(\fp) < X^{\frac{1}{2n+1}}} \left( 1 - \frac{1}{\Nm(\fp)} \right) \ll \frac{1}{\log X^{\frac{1}{2n+1}}} = \frac{2n+1}{\log X} \ll \prod_{\Nm(\fp) < X} \left( 1 - \frac{1}{\Nm(\fp)} \right),
\]
and hence 
\begin{align*}
\prod_{\substack{\Nm(\fp) < X^{\frac{1}{2n+1}} \\ \fp \mid \fn}} \left( 1 - \frac{1}{\Nm(\fp)} \right) &= \prod_{\Nm(\fp) < X^{\frac{1}{2n+1}}} \left( 1 - \frac{1}{\Nm(\fp)} \right) \prod_{\substack{\Nm(\fp) < X^{\frac{1}{2n+1}} \\ \fp \nmid \fn}} \left( 1 - \frac{1}{\Nm(\fp)} \right)^{-1} \\
 &\ll \prod_{\Nm(\fp) < X} \left( 1 - \frac{1}{\Nm(\fp)} \right) \prod_{\substack{\Nm(\fp) < X \\ \fp \nmid \fn}} \left( 1 - \frac{1}{\Nm(\fp)} \right)^{-1} \\
 &= \prod_{\substack{\Nm(\fp) < X \\ \fp \mid \fn}} \left( 1 - \frac{1}{\Nm(\fp)} \right).
\end{align*}

Then since we clearly have
\[
X^{1-\frac{1}{n(2n+1)}} \ll X \prod_{\substack{\Nm(\fp) < X \\ \fp \mid \fn}} \left( 1 - \frac{1}{\Nm(\fp)} \right),
\]
we have
\[
\sum_{\substack{\Nm(\fa) \leq X \\ (\fa,\fn) = 1}} 1 \ll X \prod_{\substack{\Nm(\fp) < X \\ \fp \mid \fn}} \left( 1 - \frac{1}{\Nm(\fp)} \right)
\]
as required.

\section{Erd\H{o}s's \(g\)-function in number fields}
\label{sec:NFgfct}
In this section, we show that the analogue of Erd\H{o}s's \(g\)-function in number fields which we defined in Definition~\ref{def:gfct} satisfies some key properties.

\begin{lemma}
\label{lem:basicpropertiesofg}
We have
\[
\prod_{\substack{\fp | \fn \\ \Nm(\fp) > g(\fn)}} \left(1 - \frac{1}{\Nm(\fp)} \right)^{-1} \ll 1 \aand \frac{\Nm(\fn)}{\Phi(\fn)}\ll \log(2 g(\fn)). \]
\end{lemma}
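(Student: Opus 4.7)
The plan is to prove both estimates directly from the definition of $g(\fn)$, using the Euler product for $\Nm(\fn)/\Phi(\fn)$ and Mertens' theorem from the preliminaries.

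For the first estimate, I would take logarithms:
\[
\log \prod_{\substack{\fp \mid \fn \\ \Nm(\fp) > g(\fn)}} \left(1 - \frac{1}{\Nm(\fp)}\right)^{-1} = \sum_{\substack{\fp \mid \fn \\ \Nm(\fp) > g(\fn)}} -\log\!\left(1 - \frac{1}{\Nm(\fp)}\right).
\]
Since any prime ideal satisfies $\Nm(\fp) \geq 2$, we have $1/\Nm(\fp) \leq 1/2$, and the elementary inequality $-\log(1-x) \leq x/(1-x) \leq 2x$ on $(0,1/2]$ yields
\[
\sum_{\substack{\fp \mid \fn \\ \Nm(\fp) > g(\fn)}} -\log\!\left(1 - \frac{1}{\Nm(\fp)}\right) \leq 2 \sum_{\substack{\fp \mid \fn \\ \Nm(\fp) > g(\fn)}} \frac{1}{\Nm(\fp)} < 1,
\]
where the last inequality is the defining property of $g(\fn)$. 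Exponentiating gives the claimed bound (by $e$).

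For the second estimate, I would use the standard Euler product
\[
\frac{\Nm(\fn)}{\Phi(\fn)} = \prod_{\fp \mid \fn} \left(1 - \frac{1}{\Nm(\fp)}\right)^{-1}
\]
and split the product at $g(\fn)$:
\[
\frac{\Nm(\fn)}{\Phi(\fn)} = \prod_{\substack{\fp \mid \fn \\ \Nm(\fp) \leq g(\fn)}} \left(1 - \frac{1}{\Nm(\fp)}\right)^{-1} \prod_{\substack{\fp \mid \fn \\ \Nm(\fp) > g(\fn)}} \left(1 - \frac{1}{\Nm(\fp)}\right)^{-1}.
\]
The second factor is $\ll 1$ by the first estimate. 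The first factor is at most the product over all prime ideals of norm at most $g(\fn)$, which by Theorem~\ref{thm:Merten3} is $\ll \log g(\fn) + O(1) \ll \log(2g(\fn))$; the $+2$ inside the logarithm is precisely what is needed to absorb the edge case $g(\fn) = 1$ (which occurs, e.g., when $\fn = \cO_K$), where the product is empty.

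No real obstacles are anticipated; the proof is essentially a direct unpacking of the definition combined with Theorem~\ref{thm:Merten3}. The only mild subtlety is the choice $\log(2g(\fn))$ versus $\log g(\fn)$, which handles the $g(\fn) = 1$ boundary case cleanly.
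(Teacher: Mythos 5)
Your proof is correct and follows essentially the same strategy as the paper's: take logarithms for the first estimate, split the Euler product at $g(\fn)$ for the second, and invoke Theorem~\ref{thm:Merten3} for the small-prime factor. The one place you diverge is in bounding the logarithm in the first part: you apply the elementary pointwise inequality $-\log(1-x) \leq x/(1-x) \leq 2x$ for $x \in (0,1/2]$ termwise, combined with the defining bound $\sum_{\fp \mid \fn,\ \Nm(\fp) > g(\fn)} 1/\Nm(\fp) < 1/2$, to get the explicit constant $e$. The paper instead expands $-\log(1 - 1/\Nm(\fp))$ as a power series, bounds the linear terms by the definition of $g$, and bounds the sum over \emph{all} primes of the higher-order tail by $2\zeta_K(2)$. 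Your variant is marginally cleaner in that it stays restricted to primes dividing $\fn$ and avoids invoking convergence of $\zeta_K(2)$, but the two are substantively the same argument and either suffices. Your remark about $\log(2g(\fn))$ versus $\log g(\fn)$ correctly identifies why the factor of $2$ is there (to cover $g(\fn)=1$, where the left-hand product is empty and $\log g(\fn) = 0$).
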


\begin{proof}
Observe that 
\begin{align*}
\prod_{\substack{\fp | \fn \\ \Nm(\fp) > g(\fn)}} \left(1 - \frac{1}{\Nm(\fp)} \right)^{-1}
&= \exp \left( \sum_{\substack{\fp | \fn \\ \Nm(\fp) > g(\fn)}} - \log \left( 1 - \frac{1}{\Nm(\fp)} \right) \right)  \\
&\leq \exp \left( \sum_{\substack{\fp | \fn \\ \Nm(\fp) > g(\fn)}} \frac{1}{\Nm(\fp)} + \sum_{\fp} \sum_{j=2}^{\infty} \frac{1}{\Nm(\fp)^j j} \right).
\end{align*}

Then the first sum inside the exponential is bounded by the definition of \(g\), and we have
\[
\sum_{\fp} \sum_{j=2}^{\infty} \frac{1}{\Nm(\fp)^j j} < \sum_{\fp} \sum_{j=2}^{\infty} \frac{1}{\Nm(\fp)^j} \leq 2 \zeta_K(2),
\]
giving our first result.

For the second part, we observe that
\[
\frac{\Nm(\fn)}{\Phi(\fn)} = \left( \prod_{\substack{\fp | \fn \\ \Nm(\fp) \leq g(\fn)}} \left( 1 - \frac{1}{\Nm(\fp)} \right) \right)^{-1} \prod_{\substack{\fp | \fn \\ \Nm(\fp) > g(\fn)}} \left( 1 - \frac{1}{\Nm(\fp)} \right)^{-1}. \]

Then the second product is \(\ll 1\), by the first part, and by Theorem~\ref{thm:Merten3}, we have that 
\[
\prod_{\substack{\fp | \fn \\ \Nm(\fp) \leq g(\fn)}} \left( 1 - \frac{1}{\Nm(\fp)} \right) \gg \log(2 g(\fn))^{-1},
\]
giving our second result.
\end{proof}

\begin{proof}[Proof of Lemma~\ref{lem:glemma1}]
We follow closely the proof of Lemma 2.9 in \cite{HarmanBook} (which in turn follows the proof in \cite{Erdos1970}). Define \(E = e^{t^2}\), and sort the ideals \(\fn\) with \(\Nm(\fn) \leq X\) into two disjoint sets:
\begin{itemize}
  \item those with at least \(2t\) distinct prime factors in \([t,E)\);
  \item those with less than \(2t\) distinct prime factors in \([t,E)\).
\end{itemize}

We know that the number of ideals with \(\Nm(\fn) \leq X\) is \(\ll X\) (where the implicit constant depends only on the number field \(K\)), and hence the number of ideals \(\fn\) in the first case is
\[
\ll \frac{X}{(2t)!} \left( \sum_{t \leq \Nm(\fp) < E} \frac{1}{\Nm(\fp)} \right)^{2t}. \]

By Theorem~\ref{thm:Merten2}, we have that
\[
\sum_{t \leq \Nm(\fp) < E} \frac{1}{\Nm(\fp)} \ll \log \log E \ll \log 2t,
\]
and hence the number of ideals \(\fn\) in the first case is
\[
 \ll X \frac{(A \log 2t)^{2t}}{(2t)!} = \frac{X}{t!} \left( \frac{(A \log 2t)^2}{t+1} \right) \cdots \left( \frac{(A \log 2t)^2}{2t} \right). \]
 
Then since each term in the product after the first is \(\leq 1\) for sufficiently large \(t\), we have that the size of the first set is \(\ll \frac{X}{t!}\).

So now consider the second case. Let \(\Nm(\fp_1), \Nm(\fp_2), \ldots, \Nm(\fp_{2t})\) be the smallest \(2t\) norms you get from distinct prime ideals whose norms exceed \(t\). For large \(t\), we have
\[
\sum_{j=1}^{2t} \frac{1}{\Nm(\fp_j)} < \frac{1}{4},
\]
as a result of Landau's prime ideal theorem. This implies that for \(\fn\) to have \(g(\fn) \geq t\) but \textit{not} have \(2t\) distinct prime factors whose norms lie in \([t,E)\), we need 
\[
\sum_{\substack{\fp | \fn \\ \Nm(\fp) > E}} \frac{1}{\Nm(\fp)} > \frac{1}{4},
\]
or rewritten,
\[
1 < 4 \sum_{\substack{\fp | \fn \\ \Nm(\fp) > E}} \frac{1}{\Nm(\fp)}. \]

Then
\[
\sum_{\substack{\Nm(\fn) \leq X \\ g(\fn) \geq t \\ \hbox{\scriptsize $\fn$ has $<2t$} \\ \hbox{\scriptsize distinct prime} \\ \hbox{\scriptsize factors in $[t,E)$}}} 1 < 4 \sum_{\substack{\Nm(\fn) \leq X \\ g(\fn) \geq t}} \sum_{\substack{\fp | \fn \\ \Nm(\fp) > E}} \frac{1}{\Nm(\fp)} < 4 \sum_{\Nm(\fn) \leq X} \sum_{\substack{\fp | \fn \\ \Nm(\fp) > E}} \frac{1}{\Nm(\fp)}.\]

The last double sum can be re-expressed as
\[
4 \sum_{\Nm(\fp) > E} \frac{ \# \{ \Nm(\fn) \leq X \ | \  \fp | \fn \} }{\Nm(\fp)} \ll X \sum_{\Nm(\fp) > E} \frac{1}{\Nm(\fp)^2} .\]

Since there can be at most \(n\) prime ideals of any given norm, we can get a bound on this sum (which is inefficient but sufficient for our purposes) by
\[
nX \sum_{m > E} \frac{1}{m^2} \ll \frac{X}{E}. \]

Then since \(E = e^{t^2} \gg t!\) (by taking logs and using Stirling's formula), we have \(\frac{X}{E} \ll \frac{X}{t!}\), and hence we have our result.
\end{proof}

\begin{proof}[Proof of Lemma~\ref{lem:divisorsum}]
Firstly, rewriting the second part of Lemma~\ref{lem:basicpropertiesofg}, we have
\[
\frac{\Phi(\fn)}{\Nm(\fn)} \log(2 g (\fn)) \gg 1
\]
for \(\fn \in I_K\), and hence taking \(\fn = \frac{\fq}{\fd}\), we get
\[
\frac{\Phi( \frac{\fq}{\fd})}{\Nm( \frac{\fq}{\fd})} \log(2 g (\tfrac{\fq}{\fd})) \gg 1. \]

If \(t \geq g (\tfrac{\fq}{\fd})\), then this gives us
\[
1 \ll \frac{\Phi( \frac{\fq}{\fd})}{\Nm( \frac{\fq}{\fd})} \log 2t. \]

Hence we have
\[
\sum_{\substack{\fd | \fq \\ g(\frac{\fq}{\fd}) \leq t}} \frac{1}{\Nm(\fd)} \ll \sum_{\substack{\fd | \fq \\ g(\frac{\fq}{\fd}) \leq t}} \frac{1}{\Nm(\fd)} \frac{\Phi( \frac{\fq}{\fd})}{\Nm( \frac{\fq}{\fd})} \log 2t = \log 2t \left( \frac{\sum_{\fr | \fq} \Phi(\fr)}{\Nm(\fq)} \right) = \log 2t.\]
\end{proof}

\end{document}